\theoremstyle{definition}
			 \newtheorem{rmk}[subsection]{Remark}
			 \newtheorem{rmks}[subsection]{Remarks}
			 \newtheorem{ex}[subsection]{Example}
			 \newtheorem{defn-prop}[subsection]{Definition-Proposition}
\theoremstyle{plain}     \newtheorem{thm}[subsection]{Theorem}
			 \newtheorem{lem}[subsection]{Lemma}
			 \newtheorem{cor}[subsection]{Corollary}
			 \newtheorem{prop}[subsection]{Proposition}
\numberwithin{equation}{subsection}
\newcounter{romain}[subsection]
\newcommand{\romain}
{\stepcounter{romain}
\noindent\makebox[1.5cm][r]{{\normalfont(\roman{romain})}\hspace{0.3cm}}}
\newcommand{\eq}[2]{\begin{equation}\label{#1}#2 \end{equation}}
\newcommand{\eqn}[1]{\begin{equation*}#1\end{equation*}}
\newcommand{\aln}[1]{\begin{align*}#1\end{align*}}
\newcommand{\ml}[2]{\begin{multline}\label{#1}#2 \end{multline}}
\newcommand{\mln}[1]{\begin{multline}#1\end{multline}}
\newcommand{\ga}[2]{\begin{gather}\label{#1}#2 \end{gather}}
\newcommand{\sur}{\twoheadrightarrow}
\newcommand{\inj}{\hookrightarrow}
\newcommand{\lra}{\longrightarrow}
\newcommand{\xra}[1]{\xrightarrow{#1}}
\newcommand{\riso}{\xrightarrow{\,\sim\,}}
\newcommand{\Spec}{{\rm Spec \,}}
\newcommand{\Spf}{{\rm Spf \,}}
\newcommand{\can}{{\rm can}}
\newcommand{\forget}{{\rm forget}}
\newcommand{\restr}{{\rm restr}}
\newcommand{\Ker}{\mathrm{Ker}}
\newcommand{\Id}{\mathrm{Id}}
\newcommand{\sD}{{\mathcal D}}
\newcommand{\sE}{{\mathcal E}}
\newcommand{\sF}{{\mathcal F}}
\newcommand{\sI}{{\mathcal I}}
\newcommand{\sO}{{\mathcal O}}
\newcommand{\sP}{{\mathcal P}}
\newcommand{\sS}{{\mathcal S}}
\newcommand{\sV}{{\mathcal V}}
\newcommand{\sX}{{\mathcal X}}
\newcommand{\NN}{{\mathbb N}}
\newcommand{\ZZ}{\mathbb{Z}}
\newcommand{\fa}{\mathfrak{a}}
\newcommand{\uk}{\underline{k}}
\newcommand{\uq}{\underline{q}}
\newcommand{\del}{\partial}
\newcommand{\udel}{\underline{\del}}
\newcommand{\utau}{\underline{\tau}}
\newcommand{\la}{\langle}
\newcommand{\ra}{\rangle}
\newcommand{\dix}[1]{\mathcal{D}_{#1}^{(\infty)}}
\newcommand{\tx}{\tilde{x}}
\newcommand{\tI}[1]{\widetilde{I^{(#1)}}}
\newcommand{\tsI}[3]{\widetilde{\mathcal{#1}_{#2}^{(#3)}}}
\newcommand{\osI}{\overline{\mathcal{I}}}
\newcommand{\dmx}[2]{\sD^{(#1)}_{#2}}
\begin{document}

\title[Frobenius divided modules]{A note on Frobenius divided modules\\ 
in mixed characteristics}

\author{Pierre Berthelot}
\address{IRMAR, Universit\'e de Rennes 1,
Campus de Beaulieu,
35042 Rennes cedex, France}
\email{pierre.berthelot@univ-rennes1.fr}

\begin{abstract}
If $X$ is a smooth scheme over a perfect field of characteristic $p$, and if $\dix{X}$
is the sheaf of differential operators on $X$ \cite{EGAIV4}, it is well known that
giving an action of $\dix{X}$ on an $\sO_X$-module $\sE$ is equivalent to giving an
infinite sequence of $\sO_X$-modules descending $\sE$ via the iterates of the Frobenius
endomorphism of $X$ \cite{Gi}. We show that this result can be generalized to any
infinitesimal deformation $f : X \to S$ of a smooth morphism in characteristic $p$,
endowed with Frobenius liftings. We also show that it extends to adic formal schemes
such that $p$ belongs to an ideal of definition. In \cite{DS}, dos Santos used this
result to lift $\dix{X}$-modules from characteristic $p$ to characteristic $0$ with
control of the differential Galois group.
\end{abstract}

%% 
 % \begin{altabstract}
 % Si $X$ est un sch\'ema lisse sur un corps parfait de caract\'eristique $p$, et si
 % $\dix{X}$ est le faisceau des op\'erateurs diff\'erentiels sur $X$ \cite{EGAIV4}, on
 % sait que donner une action de $\dix{X}$ sur un $\sO_X$-module $\sE$ \'equivaut \`a
 % donner une suite infinie de $\sO_X$-modules descendant $\sE$ par les it\'er\'es de
 % l'endomorphisme de Frobenius de $X$ \cite{Gi}. Nous montrons que ce r\'esultat peut
 % \^etre g\'en\'eralis\'e au cas d'un morphism lisse $X \to S$ qui est une d\'eformation
 % infinit\'esimale d'un morphisme de caract\'eristique $p$, munie de rel\`evements des
 % morphismes de Frobenius. Nous montrons aussi qu'il s'\'etend aux sch\'emas formels
 % adiques tels que $p$ appartienne \`a un id\'eal de d\'efinition. Ce r\'esultat a
 % \'et\'e utilis\'e par dos Santos \cite{DS} pour relever les
 % $\dix{X}$-modules de la caract\'eristique $p$ \`a la caract\'eristique $0$ en
 % contr\^olant le groupe de Galois diff\'erentiel du rel\`evement.
 % \end{altabstract}
 %%

\date{March 10, 2010.}
\subjclass[2000]{12H05, 12H25, 13A35, 13N10, 14F30, 16S32}
\keywords{$D$-modules, Frobenius morphism, descent theory, deformation theory.}
%\altkeywords{$D$-modules, morphisms de Frobenius, th\'eorie de la descente, th\'eorie 
%des d\'eformations}

\maketitle
\thispagestyle{empty}

\setcounter{tocdepth}{1}\tableofcontents

%%%%%%%%%%%%%%%%%%%%%%%%%%%%%%%%
\section*{Introduction}
%%%%%%%%%%%%%%%%%%%%%%%%%%%%%%%%
Let $X_0$ be a smooth scheme over a perfect field $k$ of characteristic $p>0$, and
$\dix{X_0}$ the sheaf of differential operators on $X_0$ relative to $S_0 = \Spec k$
(in the sense of \cite[16.8]{EGAIV4}). A classical result of Katz \cite[Th.~1.3]{Gi},
based on Cartier's descent \cite[Th.~5.1]{Ka}, asserts that there is an equivalence
between the category of vector bundles on $X_0$ endowed with a left action of
$\dix{X_0}$, and the category of families of vector bundles $\sE_i$ on $X_0$, $i \geq
0$, endowed with $\sO_{X_0}$-linear isomorphisms $\alpha_i : F_{X_0}^*\sE_{i+1} \riso
\sE_i$, where $F_{X_0}$ is the absolute Frobenius endomorphism of $X_0$.
The purpose of this note is to explain how the theory of arithmetic $\sD$-modules
developed in \cite{Be1} and \cite{Be2} allows to generalize this result to
infinitesimal deformations of this setup, which are not necessarily characteristic $p$
deformations. Using limit arguments, we obtain a similar generalization for separated
and complete modules over a formal scheme, including in mixed characteristics. When the
base is a discrete valuation ring of mixed characteristics and $\dim(X_0)=1$, we
recover the correspondence defined earlier by Matzat \cite{Ma}.

We actually start with the more general situation of a smooth morphism $f_0 : X_0 \to
S_0$ between characteristic $p$ schemes. In particular, the perfection hypothesis on
the basis can be removed simply by working with the relative Frobenius morphism
$F_{X_0/S_0}$ instead of the absolute Frobenius endomorphism $F_{X_0}$ (as in
\cite[Th.~5.1]{Ka}). We consider a nilpotent immersion $S_0 \inj S$ and a smooth
morphism $f : X \to S$ lifting $f_0$. We assume that an endomorphism $\sigma : S \to S$
lifting $F_{S_0}$ and an $S$-morphism $F : X \to X^{(1)}$ lifting $F_{X_0/S_0}$ are
given (denoting by $X^{(i)}$ the pull-back of $X$ by $\sigma^i$). Then our main result
is Theorem \ref{Main}, which asserts that, under these assumptions, the category of
$\dix{X}$-modules is equivalent to the category of families of $\sO_{X^{(i)}}$-modules
$\sE_i$ endowed with isomorphisms $F^*\sE_{i+1} \riso \sE_i$. Note that this
equivalence holds without any condition on the modules.

There are two steps in the proof. The first one is to show that, for any such family,
there exists on each $\sE_i$ a unique structure of $\dix{X^{(i)}}$-module such that the
isomorphisms $\alpha_i$ are $\dix{X^{(i)}}$-linear (Theorem \ref{OeqD}). The second one
is to show that a $\dix{X}$-module can be indefinitely descended by liftings of
Frobenius. While the latter is a direct consequence of the Frobenius descent theorem
\cite[2.3.6]{Be2}, the first step is not covered by the results of \cite{Be2}. It
requires the whole structure provided by the infinite sequence $(\sE_i,\alpha_i)$, but
the theory of arithmetic $\sD$-modules provides a more precise information about the
differential structure obtained after a finite number of Frobenius pull-backs. Namely,
the key result is the following (Proposition \ref{GetmPDstrat}): if $\fa \subset \sO_S$
is the ideal defining $S_0$, and if $r$ is an integer such that $\fa^r=0$, then, for
any $m \geq 0$ and any $\sO_{X^{(m+r)}}$-module $\sF$, there exists on $F^{m+r\,*}\sF$
a canonical structure of $\dmx{m}{X}$-module, $\dmx{m}{X}$ being the ring of 
differential operators defined in \cite{Be1}. To prove the existence of this
structure and its main properties, the main tool is the interpretation of $\sD$-module
structures in terms of appropriate notions of stratification, as initiated by
Grothendieck \cite[Appendix]{Gr}.

This note has been written to answer questions raised by J.~P.~dos~Santos in his study
of the variation of the differential Galois group associated to liftings of
$\dix{X}$-modules from charactristic $p$ to characteristic $0$ \cite{DS}. It is a
pleasure to thank him for giving me this opportunity to clarify the relations between
the Frobenius descent theorem proved in \cite{Be2} and the classical interpretations of
$\dix{X}$-modules in terms of infinite Frobenius descent.

\vspace{2mm}
%%%%%%%%%%%%%%%%
\noindent\textbf{Conventions.} ---\ \ a) We denote by $p$ a fixed prime number.

b) In this note, modules over non commutative rings will always be left modules.
%%%%%%%%%%%%%%%%

%%%%%%%%%%%%%%%%%%%%%%%%%%%%%%%%
\section{Frobenius divided $\mathcal{D}$-modules and $\mathcal{O}$-modules}
\label{Divided}
%%%%%%%%%%%%%%%%%%%%%%%%%%%%%%%%

We show here that the notions of Frobenius divided $\mathcal{O}$-module and Frobenius 
divided $\mathcal{D}$-module coincide.

%%%%%%%%%%%%%%%%
\subsection{}\label{Setup}
%%%%%%%%%%%%%%%%
Let $S$ be a scheme, and $\fa \subset \sO_S$ a quasi-coherent nilpotent ideal such
that $p \in \fa$. We denote by $S_0 \subset S$ the closed subscheme defined by $\fa$,
and we suppose given an endomorphism $\sigma : S \to S$ lifting the absolute Frobenius
endomorphism of $S_0$. For any $S$-scheme $X$ and any $i \in \NN$, we denote by $X_0$
the reduction of $X$ modulo $\fa$, and by $X^{(i)}$ the $S$-scheme deduced from $X$ by
base change by $\sigma^i : S \to S$.

We will consider $S$-schemes $X$ endowed with an $S$-morphism $F : X \to X^{(1)}$
lifting the relative Frobenius morphism of $X_0$ with respect to $S_0$. For any $i \geq
0$, we will simply denote by $F : X^{(i)} \to X^{(i+1)}$ the morphism deduced from $F$
by base change by $\sigma^i$. More generally, for any $i, r \geq 0$, we will denote by
$F^r : X^{(i)} \to X^{(i+r)}$ the composition of the $r$ successive morphisms $F :
X^{(i+j)} \to X^{(i+j+1)}$ for $0 \leq j \leq r-1$. The morphism $F$ is automatically
finite locally free, as a consequence of the flatness criterion by fibers
\cite[Th.~11.3.10]{EGAIV3}.
 
In this situation, an \textit{$F$-divided $\sO_X$-module} will be a family
$(\sE_i,\alpha_i)_{i \geq 0}$ of $\sO_{X^{(i)}}$-modules $\sE_i$, endowed with
$\sO_{X^{(i)}}$-linear isomorphisms $\alpha_i : F^*\sE_{i+1} \riso \sE_i$. They form a
category, for which the morphisms from $(\sE_i,\alpha_i)$ to $(\sE'_i,\alpha'_i)$ are
the families of $\sO_{X^{(i)}}$-linear homomorphisms $\sE_i \to \sE'_i$ which commute
with the $\alpha_i$'s and $\alpha'_i$'s in the obvious sense. Note that our
terminology differs a little from that of \cite{DS}, where the term ``$F$-divided'' is
used only when $F$ is the actual Frobenius in characteristic $p$, and the term
``$\Phi$-divided'' is used instead in a lifted situation of mixed characteristics.
Other terminologies can be found in the literature; I hope the terminology used here
will not cause any confusion.

We will assume that $X$ is a smooth $S$-scheme, so that we can consider the sheaf of
differential operators on $X$ relative to $S$, as defined by Grothendieck in
\cite[16.8]{EGAIV4}. We will denote this sheaf by $\dix{X}$, and we recall that, if $f :
X \to Y$ is an $S$-morphism between two smooth $S$-schemes, the usual inverse image
$f^*\sF$ in the sense of $\sO$-modules of a $\dix{Y}$-module $\sF$ has a canonical
structure of $\dix{X}$-module (see for example \cite[2.1.1]{Be2}, which is valid
for $\dix{X}$-modules). Since $X^{(i)}$ is smooth over $S$ for all $i$, we can
introduce the notion of \textit{$F$-divided $\dix{X}$-module} as being a family
$(\sE_i,\alpha_i)_{i \geq 0}$ of $\dix{X^{(i)}}$-modules $\sE_i$, endowed with
$\dix{X^{(i)}}$-linear isomorphisms $\alpha_i : F^*\sE_{i+1} \riso \sE_i$.

The main result of this section is the following:

%%%%%%%%%%%%%%%%
\begin{thm}\label{OeqD}
Under the previous hypotheses, the obvious forgetful functor 
%%%%%%%%%%%%%%%%
\eq{forget}{ \Omega_S : \{ F\textnormal{-divided }\dix{X}\textnormal{-modules} \}  \lra  
\{ F\textnormal{-divided }\sO_X\textnormal{-modules} \} }
%%%%%%%%%%%%%%%%
is an equivalence of categories.
\end{thm}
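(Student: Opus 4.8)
My plan is to deduce Theorem \ref{OeqD} from Proposition \ref{GetmPDstrat}. I use throughout that $\dix{X} = \varinjlim_m \dmx{m}{X}$, so that a $\dix{X}$-module structure on an $\sO_X$-module amounts to a compatible family of $\dmx{m}{X}$-module structures, one for each $m \geq 0$, and likewise over every $X^{(i)}$. Fix $r$ with $\fa^r = 0$. For an $F$-divided $\sO_X$-module $(\sE_i,\alpha_i)$, composition of the structural isomorphisms yields for all $i, s \geq 0$ isomorphisms $\alpha_i^{(s)} : F^{s\,*}\sE_{i+s} \riso \sE_i$ with $\alpha_i^{(s+1)} = \alpha_i^{(s)} \circ F^{s\,*}(\alpha_{i+s})$; in particular each $\alpha_i^{(s)}$ is $\sO_{X^{(i)}}$-linear.

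To prove that $\Omega_S$ is essentially surjective I would put on each $\sE_i$ the following $\dix{X^{(i)}}$-module structure. For each $m$, Proposition \ref{GetmPDstrat} applied over $X^{(i)}$ provides a canonical $\dmx{m}{X^{(i)}}$-module structure on $F^{m+r\,*}\sE_{i+m+r}$, which I transport to $\sE_i$ along the isomorphism $\alpha_i^{(m+r)}$. That these $\dmx{m}{X^{(i)}}$-structures are compatible as $m$ grows, hence glue to a $\dix{X^{(i)}}$-module structure, and that each $\alpha_i$ is then $\dix{X^{(i)}}$-linear, both follow --- using the factorisation of $\alpha_i^{(m+1+r)}$ recalled above together with the isomorphism $\sE_{i+m+r} \simeq F^*\sE_{i+m+1+r}$ --- from the compatibility of the canonical structure with one Frobenius pullback, i.e. with the passage from level $m$ to level $m+1$, which is among the properties established in Proposition \ref{GetmPDstrat}. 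This makes $(\sE_i,\alpha_i)$ into an $F$-divided $\dix{X}$-module lying over the given one.

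Faithfulness of $\Omega_S$ is clear, and for fullness the essential point is a uniqueness statement: on any $F$-divided $\dix{X}$-module $(\sE_i,\alpha_i)$, the $\dix{X^{(i)}}$-structure is forced to be the one just constructed. Indeed, since $\alpha_i$ --- hence each iterate $\alpha_i^{(m+r)}$ --- is $\dix{X^{(i)}}$-linear, it is in particular $\dmx{m}{X^{(i)}}$-linear; and because $m < m+r$, the restriction to level $m$ of the pullback structure on $F^{m+r\,*}\sE_{i+m+r}$ involves none of the differential structure of $\sE_{i+m+r}$ and equals, by Proposition \ref{GetmPDstrat}, the canonical $\dmx{m}{X^{(i)}}$-structure. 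Thus the $\dmx{m}{X^{(i)}}$-structure on $\sE_i$ is the transported canonical one, for every $m$. Consequently, given two $F$-divided $\dix{X}$-modules and a morphism $(u_i)$ of their underlying $F$-divided $\sO_X$-modules, the $u_i$ correspond under the $\alpha_i^{(m+r)}$ to the pullbacks $F^{m+r\,*}u_{i+m+r}$, which are $\dmx{m}{X^{(i)}}$-linear by the functoriality of the canonical structure; hence each $u_i$ is $\dmx{m}{X^{(i)}}$-linear for all $m$, so $\dix{X^{(i)}}$-linear. Therefore $\Omega_S$ is full, and together with essential surjectivity this shows that it is an equivalence.

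The whole weight of the argument rests on the properties of the canonical structure furnished by Proposition \ref{GetmPDstrat}: its functoriality in $\sF$, its compatibility with a single Frobenius pullback raising the level by one, and the fact that at level $m$ it agrees with the pullback of any ambient differential structure carried by $\sF$, precisely because the level $m$ lies below the threshold $m+r$. These are the genuinely substantial inputs; the remainder of the proof of Theorem \ref{OeqD} is the formal bookkeeping carried out above, and the nilpotency hypothesis $\fa^r=0$ enters only through the fixed shift by $r$ that the canonical structure requires.
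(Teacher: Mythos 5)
Your argument is correct and is essentially the proof given in the paper: both reduce via $\varinjlim_m\dmx{m}{X}\riso\dix{X}$ to level-$m$ structures, transport the canonical structure of Proposition \ref{GetmPDstrat} to $\sE_i$ along the composite isomorphisms $F^{m+r\,*}\sE_{i+m+r}\riso\sE_i$, and derive uniqueness (hence fullness) from the fact that this canonical structure is independent of any differential structure carried by $\sE_{i+m+r}$. The only differences are organisational (essential surjectivity/fullness versus the paper's existence/unicity) and that the paper attributes the compatibility in $m$ and the $\dmx{m}{X^{(i)}}$-linearity of the $\alpha_i$ explicitly to parts (ii) and (iii) of Proposition \ref{GetmPDstrat}, where you describe the same inputs a little more loosely.
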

%%%%%%%%%%%%%%%%

More precisely, given an $F$-divided $\sO_X$-module $(\sE_i,\alpha_i)$, there exists on
each $\sE_i$ a unique structure of $\dix{X^{(i)}}$-module such that the isomorphisms
$\alpha_i$ are $\dix{X^{(i)}}$-linear, and each morphism of $F$-divided $\sO_X$-modules
is then a family of $\dix{X^{(i)}}$-linear maps. Endowing each $\sE_i$ with this
$\dix{X^{(i)}}$-module structure provides a quasi-inverse functor to $\Omega_S$.

We will prove this statement in subsection \ref{PfOeqD}, after a few preliminary
results. We first fix notations. If $A$ is a commutative ring, $I \subset A$ an ideal,
and $j \geq 0$ an integer, we denote by $I^{(j)} \subset A$ the ideal generated by the
elements $a^{p^j}$, when $a$ varies in $I$. Assuming that another ideal $\fa \subset A$
has been fixed, we define for each $i \geq 0$
%%%%%%%%%%%%%%%%
\eq{deftilde}{ \tI{i} := I^{(i)} + \fa I^{(i-1)} + \cdots + \fa^{i-j}I^{(j)} + 
\cdots + \fa^{i}I. }
%%%%%%%%%%%%%%%%
We will use similar notations for sheaves of ideals.

%%%%%%%%%%%%%%%%
\begin{lem}\label{Newton}
With the previous notations, assume that $p \in \fa$, and let $x \in \tI{i}$. Then
$x^p$ belongs to $\tI{i+1}$.
\end{lem}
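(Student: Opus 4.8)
The plan is to reduce to a convenient set of generators of $\tI{i}$ and then expand the $p$-th power, isolating the ``pure'' monomials from the ``mixed'' ones. First I would observe that, since $\tI{i} = \sum_{j=0}^{i} \fa^{i-j} I^{(j)}$ and $I^{(j)}$ is generated as an ideal by the elements $a^{p^j}$ with $a \in I$, every $x \in \tI{i}$ can be written as a finite sum $x = \sum_k \beta_k\, a_k^{p^{j_k}}$ with $0 \le j_k \le i$, $\beta_k \in \fa^{i-j_k}$ and $a_k \in I$. The arbitrary ring coefficients occurring in a general element of $\fa^{i-j}I^{(j)}$ are absorbed into the $\beta_k$, which remain in $\fa^{i-j_k}$ because the latter is an ideal.

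Next I would treat a single such term $g = \beta\, a^{p^j}$ with $\beta \in \fa^{i-j}$ and $a \in I$. Here $g^p = \beta^p a^{p^{j+1}}$, and since $\beta^p \in \fa^{p(i-j)} \subseteq \fa^{i-j}$ while $a^{p^{j+1}} \in I^{(j+1)}$, we obtain $g^p \in \fa^{i-j} I^{(j+1)} = \fa^{(i+1)-(j+1)} I^{(j+1)} \subseteq \tI{i+1}$. The only bookkeeping to watch is that raising to the $p$-th power multiplies the exponent of $\fa$ by $p \ge 1$, so that power never decreases, while it advances the superscript of $I$ by exactly one; this is precisely the shape of the summand of index $j+1$ in $\tI{i+1}$.

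Finally, for $x = \sum_k g_k$ I would expand $x^p = \sum_k g_k^p + pR$, where $R$ gathers all mixed monomials. By the classical fact that $\binom{p}{m_1,\dots,m_n}$ is divisible by $p$ unless some $m_k = p$, the coefficient of each mixed monomial $\prod_k g_k^{m_k}$ (with $\sum_k m_k = p$ and at least two $m_k \neq 0$) is a multiple of $p$, so $R$ is an integral combination of products of $p$ elements of $\tI{i}$, whence $R \in (\tI{i})^p \subseteq \tI{i}$. The pure terms $g_k^p$ already lie in $\tI{i+1}$ by the previous step, and since $p \in \fa$ we have $pR \in \fa\, \tI{i} = \sum_{j=0}^{i} \fa^{(i+1)-j} I^{(j)} \subseteq \tI{i+1}$; adding the two contributions yields $x^p \in \tI{i+1}$. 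I do not expect a serious obstacle: the argument is elementary, and the only points requiring care are the reduction to generators and the alignment of the $\fa$-exponents with the shift $I^{(j)} \to I^{(j+1)}$, the hypothesis $p \in \fa$ being used exactly once, precisely to absorb the mixed terms.
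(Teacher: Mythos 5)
Your proof is correct and follows essentially the same route as the paper's: decompose $x$ into generator terms, observe that the pure $p$-th powers $(\beta a^{p^j})^p=\beta^p a^{p^{j+1}}$ land in $\fa^{i-j}I^{(j+1)}\subseteq\tI{i+1}$, and absorb the mixed multinomial terms using $p\in\fa$ together with $\fa\,\tI{i}\subseteq\tI{i+1}$. The only difference is organizational: the paper first splits $x=\sum_j y_j$ by the index $j$ and then expands each $y_j^p$ over generators in a second multinomial step, whereas you flatten this into a single expansion, which is a harmless simplification.
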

%%%%%%%%%%%%%%%%

\begin{proof}
We can write $x$ as $x = y_i + \cdots + y_0$, with $y_j \in \fa^{i-j}I^{(j)}$.
Therefore,
%%%%%%%%%%%%%%%%
\eqn{ x^p \in (y_i^p,\ldots,y_0^p) + p(y_i,\ldots,y_0). }
%%%%%%%%%%%%%%%%
On the one hand, we have
%%%%%%%%%%%%%%%%
\eqn{ py_j \in p\fa^{i-j}I^{(j)} \subset \fa^{i-j+1}I^{(j)} \subset \tI{i+1}. }
%%%%%%%%%%%%%%%%
On the other hand, for each $j$ such that $0 \leq j \leq i$, we can write $y_j$ as a 
sum $y_j = \sum_{k=1}^{r_j} a_{j,k}z_{j,k}^{p^j}$, with $a_{j,k} \in \fa^{i-j}$ and
$z_{j,k} \in \sI$. We deduce
%%%%%%%%%%%%%%%%
\eqn{ y_j^p = \sum_{k=1}^{r_j} a_{j,k}^p z_{j,k}^{p^{j+1}} + 
\sum_{\substack{n_1+\cdots+n_{r_j}=p\\ \forall k,\;0\leq n_k\neq p}} 
\binom{p}{n_1,\ldots,n_{r_j}} \prod_{k=1}^{r_j}a_{j,k}^{n_k}z_{j,k}^{n_k p^j}. }
%%%%%%%%%%%%%%%%
Each term of the first sum belongs to $\fa^{i-j}I^{(p^{j+1})} \subset \tI{i+1}$. In 
each term of the second sum, the multinomial coefficient is divisible by $p \in \fa$, 
and the product belongs to $\fa^{i-j}I^{(j)}$. So the second sum too belongs to 
$\tI{i+1}$.
\end{proof}

%%%%%%%%%%%%%%%%
\begin{lem}\label{Diag}
Under the hypotheses of \ref{Setup}, let $r \geq 0$ be an integer, and let $\sI_r
\subset \sO_{X^{(r)}\times X^{(r)}}$ be the ideal defining the diagonal immersion
$X^{(r)} \inj X^{(r)}\times_S X^{(r)}$. Then, for $i \leq r$, we have
%%%%%%%%%%%%%%%%
\eq{keyincl}{ (F^i\times F^i)^*(\sI_r) \subset \tsI{I}{r-i}{i}. }
%%%%%%%%%%%%%%%%
\end{lem}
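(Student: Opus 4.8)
The plan is to reduce to a local computation with coordinates and then argue by induction on $i$, the engine being a single ``one-step'' estimate for the morphism $F$ together with Lemma \ref{Newton}. Since the assertion is an inclusion of quasi-coherent ideals, it may be checked locally on $X$. I would therefore choose \'etale coordinates $t_1,\dots,t_n$ on $X$ over $S$; their base changes $t_k^{(i)}$ give coordinates on each $X^{(i)}$, and $\sI_i$ is generated by the sections $\tau_k^{(i)} := t_k^{(i)}\otimes 1 - 1\otimes t_k^{(i)}$ of $\sO_{X^{(i)}\times_S X^{(i)}}$. Because $F : X^{(j-1)} \to X^{(j)}$ lifts the relative Frobenius, one has $F^*(t_k^{(j)}) = (t_k^{(j-1)})^p + c_k$ with $c_k \in \fa\,\sO_{X^{(j-1)}}$.

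The heart of the matter is the one-step inclusion
\[
(F\times F)^*\,\tsI{I}{j}{i} \subset \tsI{I}{j-1}{i+1}, \qquad F : X^{(j-1)} \to X^{(j)}.
\]
Since $\fa$ comes from the base $S$ and $F$ is an $S$-morphism, $\fa$ pulls back to $\fa$; expanding $\tsI{I}{j}{i} = \sum_{l=0}^{i}\fa^{i-l}\sI_j^{(l)}$ and using that extension of ideals respects sums and products, the inclusion reduces to $(F\times F)^*\sI_j^{(l)} \subset \tsI{I}{j-1}{l+1}$ for each $l$, after which a routine reindexing of the exponents of $\fa$ (for each $m$, the term $\fa^{i+1-m}\sI_{j-1}^{(m)}$ appears for some admissible $l$) recovers $\tsI{I}{j-1}{i+1}$.

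For $l=0$ I would compute $(F\times F)^*\tau_k^{(j)} = v_k\otimes 1 - 1\otimes v_k$ with $v_k = F^*(t_k^{(j)})$, and split it using $v_k = (t_k^{(j-1)})^p + c_k$. The power part equals
\[
(\tau_k^{(j-1)})^p + \sum_{m=1}^{p-1}\binom{p}{m}\,(1\otimes t_k^{(j-1)})^{p-m}\,(\tau_k^{(j-1)})^m,
\]
which lies in $\sI_{j-1}^{(1)} + \fa\,\sI_{j-1}$ because every binomial coefficient above is divisible by $p\in\fa$ and $\tau_k^{(j-1)}\in\sI_{j-1}$. The correction part $c_k\otimes 1 - 1\otimes c_k$ lies in $\fa\,\sI_{j-1}$ once one writes $c_k = \sum_s a_s b_s$ with $a_s\in\fa$ and notes that $a_s\otimes 1 = 1\otimes a_s$ in the fibre product over $S$, so that $c_k\otimes1 - 1\otimes c_k = \sum_s a_s\,(b_s\otimes1 - 1\otimes b_s)$. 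Both summands lie in $\tsI{I}{j-1}{1}$. For general $l$, the ideal $\sI_j^{(l)}$ is generated by the $p^l$-th powers $a^{p^l}$ with $a\in\sI_j$, and these pull back to $((F\times F)^*a)^{p^l}$ with $(F\times F)^*a \in \tsI{I}{j-1}{1}$ by the case $l=0$; applying Lemma \ref{Newton} $l$ times gives $((F\times F)^*a)^{p^l}\in\tsI{I}{j-1}{l+1}$, and since the target is an ideal this settles the inclusion.

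Finally I would prove the stated inclusion by induction on $i$, with $r$ fixed. The case $i=0$ is the equality $\sI_r = \tsI{I}{r}{0}$. For the inductive step, factoring $F^{i+1} : X^{(r-i-1)} \to X^{(r)}$ as $X^{(r-i-1)} \xra{F} X^{(r-i)} \xra{F^i} X^{(r)}$ gives
\[
(F^{i+1}\times F^{i+1})^*\sI_r = (F\times F)^*\,(F^i\times F^i)^*\sI_r \subset (F\times F)^*\,\tsI{I}{r-i}{i} \subset \tsI{I}{r-i-1}{i+1},
\]
where the first inclusion is the induction hypothesis and the second is the one-step estimate with $j=r-i$. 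The main obstacle is precisely this one-step inclusion: keeping the $\fa$-adic bookkeeping exact, and in particular recognizing that the Frobenius-correction term $c_k\otimes1 - 1\otimes c_k$ lands in $\fa\,\sI_{j-1}$ (rather than merely in $\fa\cap\sI_{j-1}$) thanks to $\fa$ being pulled back from the base, is the delicate point; once that is in place, Lemma \ref{Newton} handles the passage to $p^l$-th powers essentially for free.
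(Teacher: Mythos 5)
Your proof is correct and takes essentially the same route as the paper's: a base-case computation showing $(F\times F)^*$ of a generator of the diagonal ideal lands in $\sI^{(1)}+\fa\,\sI$ (via the binomial expansion plus the observation that the Frobenius-correction term lies in $\fa\,\sI$ because $\fa$ comes from the base), followed by an induction on $i$ whose inductive step is exactly your one-step estimate, handled by iterating Lemma \ref{Newton} on $p$-th powers. The only cosmetic difference is that the paper argues with arbitrary sections $1\otimes x - x\otimes 1$ rather than with chosen \'etale coordinates.
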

%%%%%%%%%%%%%%%%

\begin{proof}
The ideal $\sI_r$ is generated by sections of the form $\xi' = 1\otimes x' - x'\otimes
1$, where $x'$ is a section of $\sO_{X^{(r)}}$, and we may assume that $x' = 1\otimes 
x$, where $x$ is a section of $\sO_{X^{(r-i)}}$. Let $\xi = 1\otimes x - x\otimes 1 
\in \sO_{X^{(r-i)}\times X^{(r-i)}}$. 

For $i = 1$, the morphism $F : X^{(r-1)} \to X^{(r)}$ is a lifting over $S$ of the 
relative Frobenius morphism of $X^{(r-1)}$. So we can write
%%%%%%%%%%%%%%%%
\eqn{ F^*(x') = x^p + \sum_k a_k y_k, }
%%%%%%%%%%%%%%%%
with $a_k \in \fa$, $y_k \in \sO_{X^{(r-1)}}.$ It follows that
%%%%%%%%%%%%%%%%
\aln{ (F\times F)^*(\xi') & = 1 \otimes x^p - x^p \otimes 1 + \sum_k a_k(1\otimes y_k
- y_k \otimes 1)\\
& = (\xi + x\otimes 1)^p - x^p \otimes 1 + \sum_k a_k(1\otimes y_k - y_k \otimes 1)\\ 
& = \xi^p + \sum_{k=1}^{p-1}\binom{p}{k} (x^{p-k}\otimes 1) \xi^k + 
\sum_k a_k(1\otimes y_k - y_k \otimes 1). }
%%%%%%%%%%%%%%%%
Since $p \in \fa$, both sums belong to $\fa\sI_{r-1}$, and we get that $(F\times 
F)^*(\xi')$ belongs to $\sI_{r-1}^{(1)} + \fa\sI_{r-1} = \tsI{I}{r-1}{1}$ as wanted.

We can then argue by induction on $i$. Assuming the lemma for $i-1$, it suffices to 
prove that 
%%%%%%%%%%%%%%%%
\eq{raisetsI}{(F\times F)^*(\tsI{I}{r-i+1}{i-1}) \subset \tsI{I}{r-i}{i}. }
%%%%%%%%%%%%%%%%
Since $\fa^{i-j-1}\tsI{I}{r-i}{j+1} \subset \tsI{I}{r-i}{i}$, it suffices to show that,
for any $\eta \in \sI_{r-i+1}$ and any $j \geq 0$, $(F\times F)^*(\eta^{p^j}) \in
\tsI{I}{r-i}{j+1}$. But $(F\times F)^*(\eta^{p^j}) = ((F\times
F)^*(\eta))^{p^j}$, and we have seen that $(F\times F)^*(\eta) \in
\tsI{I}{r-i}{1}$. Applying repeatedly Lemma \ref{Newton}, the claim follows.
\end{proof}

%%%%%%%%%%%%%%%%
\subsection{}\label{Dmodar}
%%%%%%%%%%%%%%%%
We now recall briefly some notions about arithmetic $\sD$-modules; readers looking for 
an introduction with more details can refer to \cite{CEB}.

On an open subset where $X$ has a set of local coordinates $t_1,\ldots,t_d$ relative 
to $S$, the ring $\dix{X}$ is a free $\sO_X$-module which admits as basis a set of 
operators $\udel^{[\uk]}$, for $\uk = (k_1,\ldots,k_d) \in \NN^d$, satisfying the 
following properties:

\romain If $\uk = (0,\ldots,0)$, then $\udel^{[\uk]} = 1$.

\romain If $k_i=1$, and $k_j=0$ for $j\neq i$, then $\udel^{[\uk]}$ is the derivation 
$\del/\del t_i$ from the dual basis to the basis of $1$-forms $(dt_j)$. 

\romain For all $\uk,\uk' \in \NN^d$, we have $\udel^{[\uk]}\udel^{[\uk']} = 
\binom{\uk+\uk'}{\uk}\udel^{[\uk+\uk']}$.

The last property shows that the operators $\udel^{[\uk]}$ behave ``as
$\frac{1}{\uk!}\prod_i(\del/\del t_i)^{k_i}$ in characteteristic $0$''. It also
shows that, outside characteristic $0$, prescribing the action of the derivations
$\del/\del t_i$ does not suffice to define an action of $\dix{X}$. It is well known
that such an action is determined by the action of the operators $(\del/\del
t_i)^{[p^j]}$ for all $i$ and all $j \in \NN$. In particular, prescribing an action of
$\dix{X}$ on an $\sO_X$-module is a process of infinite nature.

When the integers prime to $p$ are invertible on the base scheme (as in our situation),
one way to do it is to use the ``rings of differential operators of finite level''
$\dmx{m}{X}$, for $m \in \NN$ \cite{Be1}: these form a direct system of rings such that
%%%%%%%%%%%%%%%%
\eq{limit}{ \varinjlim_m \dmx{m}{X} \riso \dix{X}. }
%%%%%%%%%%%%%%%%
In a local situation as above, $\dmx{m}{X}$ is a free $\sO_X$-module. It has a basis
of operators $\udel^{\la\uk\ra_{(m)}}$ such that $\udel^{\la\uk\ra_{(m)}}$
maps to $\uq! \udel^{[\uk]}$, where $\uq = (q_1,\ldots,q_d)$ is defined by
%%%%%%%%%%%%%%%%
\eq{euclid}{ \forall i, \quad k_i = p^m q_i + r_i, \quad 0 \leq r_i < p^m.}
%%%%%%%%%%%%%%%%
In particular, the homomorphism $\dmx{m}{X} \to \dix{X}$ is not injective when $p$ is
nilpotent on $S$, but it induces an isomorphism of $\sO_X$-modules between the
subsheaves of differential operators of order $< p^{m+1}$ (note in particular that, for
$j \leq m$, $(\del/\del t_i)^{\la p^j\ra_{(m)}}$ maps to $(\del/\del t_i)^{[p^j]}$ in
$\dix{X}$). An important difference between the sheaves $\dmx{m}{X}$ and $\dix{X}$ is
that an action of $\dmx{m}{X}$ on an $\sO_X$-module is known when the action of the
operators $(\del/\del t_i)^{\la p^j\ra_{(m)}}$ is known for $j \leq m$ (this is a
consequence of the decomposition of the operators $\udel^{\la\uk\ra_{(m)}}$
\cite[(2.2.5.1)]{Be1}). Thus prescribing an action of $\dmx{m}{X}$ is a process of
finite nature, and this will be illustrated by Proposition \ref{GetmPDstrat} below.

To define the action of differential operators of order $> 1$, we will have to use the
notion of stratification in the case of $\dix{X}$-modules (see \cite[2.10]{BO}) and its
divided power variants in the case of $\dmx{m}{X}$-modules (see \cite[4.3]{BO} and its
level $m$ generalization \cite[2.3]{Be1}). If $\sI$ is the ideal defining the diagonal
immersion $X \inj X\times_S X$, we will denote by $\sP^n_{X}$ the sheaf $\sO_{X\times
X}/\sI^{n+1}$ (sheaf of principal parts of order $n$ on $X$), by $\sP_{X,(m)}$ the
$m$-PD-envelope of $\sI$ \cite[Prop.~1.4.1]{Be1}, by $\osI$ the canonical
$m$-PD-ideal defined by $\sI$ in $\sP_{X,(m)}$, and by $\sP^n_{X,(m)}$ the quotient
$\sP_{X,(m)}/\osI^{\{n+1\}_{(m)}}$, where $\osI^{\{n+1\}_{(m)}}$ is the $m$-th step of
the $m$-PD-adic filtration (as defined in \cite[App., A.3]{Be2}). We recall that a
stratification (resp.~an $m$-PD-stratification) on an $\sO_X$-module $\sE$ is a family
of linear isomorphisms
%%%%%%%%%%%%%%%%
\ga{strat}{ \varepsilon_n : \sP^n_{X} \otimes_{\sO_X} \sE \riso \sE \otimes_{\sO_X}
\sP^n_X \\
\label{mstrat}\text{(resp.} \ \varepsilon_n : 
\sP^n_{X,(m)} \otimes_{\sO_X} \sE \riso \sE \otimes_{\sO_X} \sP^n_{X,(m)}),
}
%%%%%%%%%%%%%%%%
compatible when $n$ varies, such that $\varepsilon_0 = \Id$, and satisfying a cocycle
relation on the triple product $X\times_S X\times_S X$ (the $\sO_X$-algebra structures
used on $\sP^n_X$ for the source and target of $\varepsilon_n$ are defined respectively
by the second and first projections $X\times_S X\to X$). Then the datum of a structure
of left $\dix{X}$-module (resp.~$\dmx{m}{X}$-module) on an $\sO_X$-module $\sE$,
extending its $\sO_X$-module structure, is equivalent to the datum of a stratification
\cite[Prop.~2.11]{BO} (resp.~an $m$-PD-stratification \cite[Prop.~2.3.2]{Be1}). The
relation between these two types of data is made explicit by the ``Taylor formula'',
which describes the isomorphisms $\varepsilon_n$ in local coordinates,
%%%%%%%%%%%%%%%%
\eq{Taylor}{ \forall x \in \sE,\quad \varepsilon_n(1\otimes x) = \sum_{|\uk|\leq 
n}(\udel^{[\uk]}x\otimes 1)\utau^{\uk}, }
%%%%%%%%%%%%%%%%
and by its analogue for $\dmx{m}{X}$-modules \cite[2.3.2]{Be1} (here, we have set 
$\tau_i = 1 \otimes t_i - t_i\otimes 1$, and $|\uk| = k_1+\cdots+k_d$).

%%%%%%%%%%%%%%%%
\begin{prop}\label{OtoD}
Under the hypotheses of \ref{Setup}, let $r \geq 1$ be an integer such that $\fa^r =
0$, and let $m \in \NN$ be another integer. 

\romain There exists a (unique) ring homomorphism
$\varphi_{m,r} : \sO_{X^{(m+r)}} \to \sP_{X,(m)}$ such that the diagram 
%%%%%%%%%%%%%%%%
\eq{factorize}{ \xymatrix@C=5ex{
\sO_{X^{(m+r)}} \ar[d]_{F^{m+r\,*}} \ar@<.8ex>[r] \ar@<-.6ex>[r] &
\sO_{X^{(m+r)}\times X^{(m+r)}} \ar[d]_{(F^{m+r}\times F^{m+r})^*} \ar[r] &
\sP_{X^{(m+r)},(m)} \ar[d]_{(F^{m+r}\times F^{m+r})^*} \ar@{->>}[r] & 
\sO_{X^{(m+r)}} \ar[ld]_{\varphi_{m,r}} \ar[d]^{F^{m+r\,*}}  \\
\sO_X \ar@<.8ex>[r] \ar@<-.6ex>[r] & \sO_{X\times X} \ar[r] & 
\sP_{X,(m)} \ar@{->>}[r]  & \sO_X 
} }
%%%%%%%%%%%%%%%%
commutes.

\romain If $m' \geq m$, the square
%%%%%%%%%%%%%%%%
\eq{incm}{ \xymatrix@C=8ex{
\sO_{X^{(m'+r)}} \ar[d]_-{F^{m'-m\,*}} \ar[r]^-{\varphi_{m',r}} & 
\sP_{X,(m')} \ar[d]^-{\can} \\
\sO_{X^{(m+r)}} \ar[r]+<-4.1ex,0ex>^-{\varphi_{m,r}} & 
\mspace{2mu}\sP_{X,(m)},\mspace{-2mu}
} }
%%%%%%%%%%%%%%%%
where the right vertical arrow is the canonical homomorphism \cite[1.4.7]{Be1}, is
commutative.

\romain If $s \geq r$, then the homomorphisms $\varphi_{m,r} \circ F^{s-r\,*} :
\sO_{X^{(m+s)}} \to \sO_{X^{(m+r)}} \to \sP_{X,(m)}$ and $(F^{s-r}\times F^{s-r})^*
\circ \varphi_{m,r} : \sO_{X^{(m+s)}} \to \sP_{X^{(s-r)},(m)} \to \sP_{X,(m)}$ are both
equal to $\varphi_{m,s}$. 
\end{prop}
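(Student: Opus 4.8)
The plan is to reduce all three assertions to one explicit description of $\varphi_{m,r}$. First I would show that the outer composite
$\sO_{X^{(m+r)}\times X^{(m+r)}}\xrightarrow{(F^{m+r}\times F^{m+r})^*}\sO_{X\times X}\to\sP_{X,(m)}$
annihilates the diagonal ideal $\sI_{m+r}$, so that it factors through the quotient $\sO_{X^{(m+r)}\times X^{(m+r)}}/\sI_{m+r}=\sO_{X^{(m+r)}}$; the resulting ring homomorphism is $\varphi_{m,r}$, and it sends a section $x$ to the image in $\sP_{X,(m)}$ of $(F^{m+r}\times F^{m+r})^*(1\otimes x)$. Once this description is available, parts (ii) and (iii) become formal consequences of the functoriality of $m$-PD-envelopes and of the cocycle relation $F^a\circ F^b=F^{a+b}$ for the Frobenius liftings.

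The crux is therefore the vanishing of the image of $\sI_{m+r}$, and this is where Lemma \ref{Diag} and the level-$m$ divided powers enter. Applying Lemma \ref{Diag} with the pullback taken all the way down from $X^{(m+r)}$ to $X$ (its integer $r$ and its index $i$ both set equal to $m+r$, so that $\sI_0=\sI$) gives $(F^{m+r}\times F^{m+r})^*(\sI_{m+r})\subseteq\widetilde{\sI^{(m+r)}}$, where $\widetilde{\sI^{(m+r)}}=\sum_{j=0}^{m+r}\fa^{m+r-j}\sI^{(j)}$ is the ideal of \eqref{deftilde} attached to the diagonal ideal $\sI$ of $X$. I would then check term by term that $\widetilde{\sI^{(m+r)}}$ dies in $\sP_{X,(m)}$. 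Using the hypothesis $\fa^r=0$, every summand with $j\le m$ already vanishes because $\fa^{m+r-j}\subseteq\fa^r=0$. For $j=m+l$ with $1\le l\le r$, I would invoke the level-$m$ relation $\xi^{p^{m+l}}=(p^l)!\,\xi^{\{p^{m+l}\}_{(m)}}$ in $\sP_{X,(m)}$ (legitimate since $\sI\subseteq\osI$); because $v_p((p^l)!)\ge l$ and $p\in\fa$, the image of $\sI^{(m+l)}$ lies in $\fa^l\sP_{X,(m)}$, whence $\fa^{m+r-j}\sI^{(j)}=\fa^{r-l}\sI^{(m+l)}\subseteq\fa^r\sP_{X,(m)}=0$. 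This settles the existence of $\varphi_{m,r}$.

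For the commutativity of \eqref{factorize} and for uniqueness in (i), I would argue with the universal property of the $m$-PD-envelope. The two composites $(F^{m+r}\times F^{m+r})^*$ and $\varphi_{m,r}\circ(\text{augmentation})$ are $m$-PD-morphisms $\sP_{X^{(m+r)},(m)}\to\sP_{X,(m)}$ (the source of $\varphi_{m,r}$ carrying the trivial $m$-PD-ideal) which induce one and the same ring homomorphism on $\sO_{X^{(m+r)}\times X^{(m+r)}}$, hence coincide; the remaining triangles commute by construction, and uniqueness follows because the augmentation is surjective. Parts (ii) and (iii) then follow by evaluating on a section $x$ through $\varphi_{m,r}(x)=$ image of $(F^{m+r}\times F^{m+r})^*(1\otimes x)$: using $1\otimes F^{a\,*}x=(F^a\times F^a)^*(1\otimes x)$ and composing pullbacks reduces each identity to the composition law $F^{m+r}\circ F^{s-r}=F^{m+s}$ (and its analogues), together with the compatibility of $\can:\sP_{X,(m')}\to\sP_{X,(m)}$, respectively of $(F^{s-r}\times F^{s-r})^*:\sP_{X^{(s-r)},(m)}\to\sP_{X,(m)}$, with the structural maps from $\sO_{X\times X}$.

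I expect the only genuine obstacle to be the term-by-term vanishing in the second paragraph: it is precisely here that the combinatorial ideal produced by Lemma \ref{Diag}, the divided-power relations of level $m$, and the nilpotency hypothesis $\fa^r=0$ must be made to interact, and one must track the exponents carefully, so that the $l$ divided powers absorbing a factor $\fa^l$ combine with the coefficient $\fa^{r-l}$ to land exactly in $\fa^r$. Everything else is bookkeeping with universal properties and functoriality.
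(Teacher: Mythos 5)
Your proposal is correct and follows essentially the same route as the paper: reduce the existence of $\varphi_{m,r}$ via Lemma \ref{Diag} (with $i=m+r$) to the vanishing of $\widetilde{\sI^{(m+r)}}$ in $\sP_{X,(m)}$, kill the terms with $j\leq m$ by $\fa^r=0$ and the terms with $j=m+l$ by the relation $z^{p^{m+l}}=(p^l)!\,z^{\{p^{m+l}\}_{(m)}}$ together with $p\in\fa$, and then deduce (ii) and (iii) by checking the identities after precomposition with the surjection from $\sO_{X^{(\cdot)}\times X^{(\cdot)}}$. Your packaging of the commutativity and uniqueness in (i) through the universal property of the $m$-PD-envelope is just a mild rephrasing of the paper's observation that the augmentation's kernel is the $m$-PD-ideal generated by $\sI_{m+r}$ and that $(F^{m+r}\times F^{m+r})^*$ is an $m$-PD-morphism.
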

%%%%%%%%%%%%%%%%

\begin{proof}
The kernel of the homomorphism $\sP_{X^{(m+r)},(m)} \sur \sO_{X^{(m+r)}}$ is the 
$m$-PD-ideal generated by $\sI_{m+r}$. Since $(F^{m+r}\times F^{m+r})^*$ is an 
$m$-PD-morphism, the factorization $\varphi_{m,r}$ exists if and only if the image of 
$\sI_{m+r}$ in $\sP_{X,(m)}$ is $0$. Using Lemma \ref{Diag}, it suffices to prove that 
the image of $\tsI{\sI}{}{m+r}$ in $\sP_{X,(m)}$ is $0$. 

An element $x \in \tsI{\sI}{}{m+r}$ can be written $x=\sum_{j=0}^{m+r}\sum_k 
a_{j,k}z_{j,k}^{p^j}$, with $a_{j,k} \in \fa^{m+r-j}\sO_X$ and $z_{j,k} \in \sI$. 
Since $\fa^r=0$, $a_{j,k} = 0$ for $j \leq m$. On the other hand, $\sI$ is mapped by 
construction to the $m$-PD-ideal $\osI \subset \sP_{X,(m)}$. Such an ideal is 
equipped with partial divided power operations $z \mapsto z^{\{k\}_{(m)}}$, $k \in 
\NN$, such that $z^k = q!z^{\{k\}_{(m)}}$ where $q$ is the quotient of $k$ by $p^m$ as 
in \eqref{euclid} \cite[1.3.5]{Be1}. For $j>m$, we obtain $z_{j,k}^{p^j} = 
p^{j-m}!z_{j,k}^{\{k\}_{(m)}}$. Since $p \in \fa$, $a_{j,k}z_{j,k}^{p^j}$ maps to 
$\fa^{m+r-j+j-m}\sP_{X,(m)} = 0$. This shows the first assertion. 

To prove the second one, it suffices to show that the square commutes after composing
with the surjection $\sO_{X^{(m'+r)}\times X^{(m'+r)}} \sur \sO_{X^{(m'+r)}}$. Thanks
to \eqref{factorize}, the composition of this surjection with $\can \circ \varphi_{m',r}$
is equal to
%%%%%%%%%%%%%%%%
\eqn{ \sO_{X^{(m'+r)}\times X^{(m'+r)}} \xra{(F^{m'+r}\times F^{m'+r})^*} \sO_{X\times X} 
\lra \sP_{X,(m)}, }
%%%%%%%%%%%%%%%%
while its composition with $\varphi_{m,r} \circ F^{m'-m\,*}$ is equal to 
%%%%%%%%%%%%%%%%
\mln{ \sO_{X^{(m'+r)}\times X^{(m'+r)}} \xra{(F^{m'-m}\times F^{m'-m})^*} 
\sO_{X^{(m+r)}\times X^{(m+r)}} \\
\xra{(F^{m+r}\times F^{m+r})^*} \sO_{X\times X} 
\lra \sP_{X,(m)}. }
%%%%%%%%%%%%%%%%
The assertion follows, and the third one is proved similarly. 
\end{proof}

%%%%%%%%%%%%%%%%
\begin{prop}\label{GetmPDstrat}
Under the hypotheses of \ref{Setup}, let $r \geq 1$ be an integer such that $\fa^r =
0$, and let $m \in \NN$ be another integer. 

\romain For any $\sO_{X^{(m+r)}}$-module $\sF$, there exists on $F^{m+r\,*}\sF$ a 
canonical $\dmx{m}{X}$-module structure extending its $\sO_X$-module structure. This 
structure is functorial with respect to $\sF$, and the functor $\Phi_{m,r}$ defined in 
this way from the category of $\sO_{X^{(m+r)}}$-modules to the category of 
$\dmx{m}{X}$-modules fits in a commutative diagram of functors
%%%%%%%%%%%%%%%%
\eq{caractPhi}{ \xymatrix@C=10ex{
\{\sO_{X^{(m+r)}}\textnormal{-modules}\} \ar[r]^-{F^{m+r\,*}} \ar[rd]^{\Phi_{m,r}} &
\{ \sO_X\textnormal{-modules} \} \\
\{\dmx{m}{X^{(m+r)}}\textnormal{-modules}\} \ar[r]+<-8.7ex,0ex>^-{F^{m+r\,*}} 
\ar[u]^-{\forget} & 
\mspace{4mu}\{\dmx{m}{X}\textnormal{-modules}\}.\mspace{-4mu} \ar[u]_-{\forget}
} }
%%%%%%%%%%%%%%%%

\romain If $m' \geq m$, the diagram of functors 
%%%%%%%%%%%%%%%%
\eq{incm2}{ \xymatrix@C=10ex{
\{\sO_{X^{(m'+r)}}\textnormal{-modules}\} \ar[r]^-{\Phi_{m',r}} \ar[d]_-{F^{m'-m\,*}} & 
\{\dmx{m'}{X}\textnormal{-modules}\} \ar[d]^-{\restr} \\
\{\sO_{X^{(m+r)}}\textnormal{-modules}\} \ar[r]^-{\Phi_{m,r}} & 
\{\dmx{m}{X}\textnormal{-modules}\}
} }
%%%%%%%%%%%%%%%%
commutes up to canonical isomorphism.

\romain If $s \geq r$, then $\Phi_{m,s} \simeq \Phi_{m,r} \circ F^{s-r\,*} \simeq
F^{s-r\,*} \circ \Phi_{m,r}$, where the last functor $F^{s-r\,*}$ is the inverse image
functor for $\dmx{m}{X^{(s-r)}}$-modules.

\end{prop}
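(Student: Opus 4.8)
The plan is to convert the homomorphism $\varphi_{m,r}$ constructed in Proposition \ref{OtoD} into an $m$-PD-stratification on $F^{m+r\,*}\sF$, and then to read off all three assertions from the corresponding compatibilities of the $\varphi$'s established in that same proposition. First I would fix an $\sO_{X^{(m+r)}}$-module $\sF$ and observe that an $m$-PD-stratification on $F^{m+r\,*}\sF$ is, by definition \eqref{mstrat}, a compatible family of $\sO_X$-linear isomorphisms $\varepsilon_n : \sP^n_{X,(m)}\otimes_{\sO_X}F^{m+r\,*}\sF \riso F^{m+r\,*}\sF \otimes_{\sO_X}\sP^n_{X,(m)}$ satisfying $\varepsilon_0=\Id$ and the cocycle condition. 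The key point is that $F^{m+r\,*}\sF = \sO_X\otimes_{\varphi_{m,r},\,\sO_{X^{(m+r)}}}\sF$ is obtained by a base change along $\varphi_{m,r}$ composed with the structure map, and that the two $\sO_X$-algebra structures on $\sP_{X,(m)}$ coming from the two projections both factor the map $F^{m+r\,*}:\sO_{X^{(m+r)}}\to\sP_{X,(m)}$ through $\varphi_{m,r}$ by the commutativity of \eqref{factorize}. Hence the canonical isomorphism $\sP_{X,(m)}\otimes_{\sO_X}F^{m+r\,*}\sF \simeq \sP_{X,(m)}\otimes_{\varphi_{m,r}}\sF \simeq F^{m+r\,*}\sF\otimes_{\sO_X}\sP_{X,(m)}$, where the middle term uses $\varphi_{m,r}$ to glue the two sides, furnishes the desired $\varepsilon$; passing to the quotients $\sP^n_{X,(m)}$ gives the $\varepsilon_n$.

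Once $\varepsilon$ is in hand, I would check the three defining properties of an $m$-PD-stratification. The normalization $\varepsilon_0=\Id$ and compatibility in $n$ are formal, coming from the fact that $\sP^0_{X,(m)}=\sO_X$ and that $\varphi_{m,r}$ is a single ring homomorphism independent of $n$. The cocycle condition on $X\times_S X\times_S X$ is the substantive verification: it must be deduced from the fact that $\varphi_{m,r}$ arises by functoriality from a morphism of schemes (equivalently, from the uniqueness in Proposition \ref{OtoD}(i) together with the multiplicativity of pullback along the triple diagonal). Having an $m$-PD-stratification, the equivalence recalled in \ref{Dmodar} (namely \cite[Prop.~2.3.2]{Be1}) then endows $F^{m+r\,*}\sF$ with a $\dmx{m}{X}$-module structure extending its $\sO_X$-structure, functorially in $\sF$; this defines $\Phi_{m,r}$. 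The commutative diagram \eqref{caractPhi} amounts to saying that the $\dmx{m}{X^{(m+r)}}$-module structure one can put on $\sF$ pulls back under $F^{m+r\,*}$ to the structure just defined, which again follows because the pullback of the $m$-PD-stratification of $\sF$ along $F^{m+r}$ is computed by the right-hand column of \eqref{factorize}.

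For parts (ii) and (iii) the strategy is to translate the compatibilities \eqref{incm} and the identities of Proposition \ref{OtoD}(iii) into compatibilities of stratifications. For (ii), the canonical homomorphism $\can:\sP_{X,(m')}\to\sP_{X,(m)}$ is a morphism of $\sO_X$-algebras for both projection structures, so applying it to the $m'$-PD-stratification produced from $\varphi_{m',r}$ and using the commutativity of the square \eqref{incm} identifies it with the $m$-PD-stratification produced from $\varphi_{m,r}\circ F^{m'-m\,*}$; since restriction of scalars $\restr$ from $\dmx{m'}{X}$ to $\dmx{m}{X}$ corresponds precisely to applying $\can$ at the level of stratifications, \eqref{incm2} commutes up to the canonical base-change isomorphism $F^{m+r\,*}\circ F^{m'-m\,*}\simeq F^{m'+r\,*}$. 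Part (iii) is handled identically, feeding the two equalities of Proposition \ref{OtoD}(iii) into the construction and using the canonical isomorphisms between iterated pullbacks. I expect the cocycle verification in the first paragraph to be the main obstacle: it is the only point where one must go beyond a single ring homomorphism and genuinely use that $\varphi_{m,r}$ is induced by a geometric morphism so as to control the triple-product comparison; the remaining assertions are then bookkeeping built on Proposition \ref{OtoD}.
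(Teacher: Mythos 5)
Your proposal is correct and follows essentially the same route as the paper: there too the $m$-PD-stratification on $F^{m+r\,*}\sF$ is obtained by identifying $p_0^*(F^{m+r\,*}\sF)$ and $p_1^*(F^{m+r\,*}\sF)$ with $\phi_{m,r}^*\sF$ via the factorization $\varphi_{m,r}$ and the commutativity of \eqref{factorize}, the cocycle condition is checked by extending Proposition \ref{OtoD}(i) to the triple product $X\times_S X\times_S X$ exactly as you indicate, and parts (ii) and (iii) are read off from \eqref{incm} and from Proposition \ref{OtoD}(iii) through the dictionary between $m$-PD-stratifications and $\dmx{m}{X}$-module structures. One cosmetic slip: the displayed identity $F^{m+r\,*}\sF=\sO_X\otimes_{\varphi_{m,r},\,\sO_{X^{(m+r)}}}\sF$ is misstated, since $\varphi_{m,r}$ takes values in $\sP_{X,(m)}$ and $F^{m+r\,*}\sF$ is the base change along $F^{m+r\,*}$; but the isomorphism $\sP_{X,(m)}\otimes_{\sO_X}F^{m+r\,*}\sF\simeq\sP_{X,(m)}\otimes_{\varphi_{m,r}}\sF$ that your argument actually relies on is the correct one, so nothing is affected.
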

%%%%%%%%%%%%%%%%

\begin{proof}
To define a $\dmx{m}{X}$-module structure on $F^{m+r\,*}\sF$, we endow it with an 
$m$-PD-stratification as follows.
 
For each $i \geq 0$, let $P_{X^{(i)},(m)} = \Spec\sP_{X^{(i)},(m)}$, and let $p_0, p_1
: P_{X^{(i)},(m)} \to X^{(i)}$ be the morphisms induced by the two projections
$X^{(i)}\times X^{(i)} \to X^{(i)}$. Using the morphism $\phi_{m,r} : P_{X,(m)} \to
X^{(m+r)}$ defined by the homomorphism $\varphi_{m,r}$ provided by Proposition
\ref{OtoD}, and denoting $\Delta : X^{(i)} \inj P_{X^{(i)},(m)}$ the factorizations of
the diagonal immersions (defined by $\sP_{X^{(i)},(m)} \sur \sO_{X^{(i)}}$), we obtain
isomorphisms
%%%%%%%%%%%%%%%%
\eq{canstrat}{ p_1^*(F^{m+r\,*}\sF) \riso (F^{m+r}\times F^{m+r})^*p_1^*\sF \riso 
\phi_{m,r}^*\Delta^*p_1^*\sF \riso \phi_{m,r}^*\sF, }
%%%%%%%%%%%%%%%%
and similarly for $p_0^*(F^{m+r\,*}\sF)$. Composing and reducing mod
$\osI^{\{n+1\}_{(m)}}$ for all $n$, we obtain a compatible family of isomorphisms
$\varepsilon_n : p_1^*(F^{m+r\,*}\sF) \riso p_0^*(F^{m+r\,*}\sF)$ as in \eqref{mstrat}.
To prove that they define an $m$-PD-stratification, one must check the cocycle 
condition. This is formal, once one has checked that Proposition \ref{OtoD} (i) can be 
generalized to the product $X \times_S X\times_S X$. Since the argument is the same 
than for the proof of \ref{OtoD} (i), we omit the details. 

This $m$-PD-stratification is clearly functorial in $\sF$. Endowing $F^{m+r\,*}\sF$ 
with the corresponding $\dmx{m}{X}$-module structure defines the functor $\Phi_{m,r}$, 
and the upper triangle in \eqref{caractPhi} commutes by construction. To prove that 
the lower one commutes, one must check that, if $\sF$ has a 
$\dmx{m}{X^{(m+r)}}$-module structure corresponding to an $m$-PD-stratification 
$(\delta_n)_{n\geq 0}$, the $m$-PD-stratification on $F^{m+r\,*}\sF$ deduced from 
$(\delta_n)$ by scalar extension via the homomorphisms $\sP^n_{X^{(m+r)},(m)} \to 
\sP^n_{X,(m)}$ defined by $F^{m+r}\times F^{m+r}$ coincides with $(\varepsilon_n)$. 
This is a consequence of the commutativity of diagram \eqref{factorize}. 

If $m' \geq m$, an $m'$-PD-stratification defines an $m$-PD-stratification by scalar
extension via the canonical homomorphisms $\sP^n_{(m')} \to \sP^n_{(m)}$. From the
point of view of $\sD$-module structures, this corresponds to scalar restriction from
$\dmx{m'}{}$ to $\dmx{m}{}$ (this can be easily checked using the corresponding Taylor
formulas for $\dmx{m}{}$ and $\dmx{m'}{}$-modules). Therefore, the commutativity of 
\eqref{incm} implies the commutativity of \eqref{incm2}. 

Finally, the third statement follows immediately from Proposition \ref{OtoD} (iii), 
using the definition of the inverse image functor for $\dmx{m}{}$-modules in terms 
of $m$-PD-stratifications \cite[2.1.1]{Be2}. 
\end{proof}

%%%%%%%%%%%%%%%%
\begin{cor}\label{Getconn}
Under the hypotheses of \ref{Setup}, let $r \geq 1$ be an integer such that $\fa^r =
0$. If $\sF$ is any $\sO_{X^{(r)}}$-module, $F^{r\,*}\sF$ is endowed with a canonical
integrable connexion, functorial in $\sF$. If $\sF$ itself is endowed with an
integrable connexion $\nabla$, the inverse image of $\nabla$ by $F^r$ is the canonical
connexion of $F^{r\,*}\sF$.
\end{cor}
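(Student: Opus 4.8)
The plan is to recognize this corollary as the special case $m=0$ of Proposition \ref{GetmPDstrat}. First I would make explicit the dictionary between $\dmx{0}{X}$-module structures and integrable connexions. Taking $m=0$ in \eqref{euclid} forces $r_i = 0$ and $q_i = k_i$, so that in the local description of \ref{Dmodar} the operator $\udel^{\la\uk\ra_{(0)}}$ corresponds to $\prod_i (\del/\del t_i)^{k_i}$; in particular, as noted in \ref{Dmodar}, a $\dmx{0}{X}$-module structure is entirely determined by the action of the derivations $\del/\del t_i$. Thus giving a $\dmx{0}{X}$-module structure on an $\sO_X$-module extending its $\sO_X$-structure is the same datum as giving an integrable connexion relative to $S$ (this is the level-$0$ instance of the equivalence between $\dmx{m}{X}$-modules and $m$-PD-stratifications). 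Under this identification, the inverse image functor for $\dmx{0}{}$-modules (\cite[2.1.1]{Be2}) is the usual inverse image of connexions.

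Granting this dictionary, the first assertion is immediate: applying Proposition \ref{GetmPDstrat} (i) with $m=0$ produces, for every $\sO_{X^{(r)}}$-module $\sF$, a canonical $\dmx{0}{X}$-module structure on $F^{r\,*}\sF$, functorial in $\sF$, that is, a canonical integrable connexion.

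For the second assertion, I would start from an $\sO_{X^{(r)}}$-module $\sF$ equipped with an integrable connexion $\nabla$, viewed as a $\dmx{0}{X^{(r)}}$-module. The commutativity of diagram \eqref{caractPhi} with $m=0$ says precisely that the $\dmx{0}{X}$-module structure on $F^{r\,*}\sF$ produced by $\Phi_{0,r}$ agrees with the one obtained by applying the inverse image functor $F^{r\,*} : \{\dmx{0}{X^{(r)}}\textnormal{-modules}\} \to \{\dmx{0}{X}\textnormal{-modules}\}$ to $(\sF,\nabla)$. Translating both sides back through the identification above, the first is the canonical connexion of $F^{r\,*}\sF$ and the second is the inverse image $F^{r\,*}\nabla$, whence the claim.

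Since everything follows by specializing an already-proved proposition, there is no genuine obstacle in the argument. The only point deserving care is the identification of $\dmx{0}{X}$-modules with integrable connexions, together with the matching of the two resulting notions of Frobenius inverse image, namely the one built from $0$-PD-stratifications in Proposition \ref{GetmPDstrat} and the classical inverse image of a connexion. This compatibility is exactly what diagram \eqref{caractPhi} encodes once the dictionary is set up.
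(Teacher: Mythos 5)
Your proof is correct and follows essentially the same route as the paper, which likewise obtains the corollary as the case $m=0$ of Proposition \ref{GetmPDstrat} (i), using the equivalence between integrable connexions and $\dmx{0}{}$-module structures (the paper cites \cite[Th.~4.8]{BO} for this dictionary rather than re-deriving it from the local basis of operators, and leaves the appeal to the lower triangle of \eqref{caractPhi} for the second assertion implicit where you spell it out).
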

%%%%%%%%%%%%%%%%

\begin{proof}Since the datum of an integrable conexion is equivalent to the datum of a
$\dmx{0}{}$-module structure \cite[Th.~4.8]{BO}, this is the particular case of
Proposition \ref{GetmPDstrat} (i) obtained for $m = 0$.
\end{proof}

%%%%%%%%%%%%%%%%
\begin{ex}
Assume that $\fa = 0$, so that $S$ is a characteristic $p$ scheme, and $F$ is the
relative Frobenius morphism $F_{X/S}$. Then we may take $r=1$, and the corollary gives
the classical connexion on the pull-back by $F_{X/S}$ of any $\sO_{X^{(1)}}$-module
\cite[Th.~5.1]{Ka} (see also \cite[2.6]{Be2}). Note that, in this case, Cartier's
theorem provides a characterization of the essential image of $F_{X/S}^*$ as being the
subcategory of the category of $\sO_X$-modules with integrable connexion such that the
connexion has $p$-curvature $0$. It would be interesting to have a similar
characterization of the essential image of $F^{r\,*}$ in the more general situation of
Corollary \ref{Getconn}, at least in the case where $S$ is flat over $\ZZ/p^r\ZZ$, and
$\fa = p\sO_S$.

Assuming again that $\fa = 0$, but for arbitrary $m$, Proposition \ref{GetmPDstrat} is
then a consequence of the combination of the previous remark with
\cite[Prop.~2.2.3]{Be2}, which grants that, for any $m, s \geq 0$, the inverse image of
a $\dmx{m}{X^{(s)}}$-module by $F^s_{X/S}$ has a canonical structure of
$\dmx{m+s}{X}$-module.
\end{ex}
%%%%%%%%%%%%%%%%

%%%%%%%%%%%%%%%%
\subsection{}\label{PfOeqD}\textit{Proof of Theorem \ref{OeqD}.} 
%%%%%%%%%%%%%%%%
We fix an integer $r$ such that $\fa^r=0$. 

\romain \textit{Unicity}. Let $(\sE_i,\alpha_i)$ be an $F$-divided $\dix{X}$-module. To 
prove that the $\dix{X^{(i)}}$-module structure of each $\sE_i$ is determined by the 
family $(\sE_i,\alpha_i)$ viewed as an $F$-divided $\sO_X$-module, we can use the 
isomorphism \eqref{limit} to reduce to proving the, for each $m$ and each $i$, the 
underlying $\dmx{m}{X^{(i)}}$-module structure of $\sE_i$ is determined by the 
$F$-divided $\sO_X$-module $(\sE_i,\alpha_i)$. 

Composing the isomorphisms $\alpha_i$, one gets a $\dmx{m}{X^{(i)}}$-linear 
isomorphism $\psi_{m,r,i} : F^{m+r\,*}\sE_{i+m+r} \riso \sE_i$. By Proposition 
\ref{GetmPDstrat} (i), the $\dmx{m}{X^{(i)}}$-module structure of $F^{m+r\,*}\sE_{i+m+r}$ 
does not depend on the $\dmx{m}{X^{(i+m+r)}}$-module structure of $\sE_{i+m+r}$, and 
is equal to the canonical structure we have defined on the inverse image by $F^{m+r}$ 
of an $\sO$-module. This proves the unicity.

\romain \textit{Existence}. We can again use the isomorphism \eqref{limit} to reduce to 
defining for all $i$ and $m$ a $\dmx{m}{X^{(i)}}$-module structure on $\sE_i$ so that 
$\alpha_i$ is $\dmx{m}{X^{(i)}}$-linear, and so that, for $m' \geq m$, the 
$\dmx{m}{X^{(i)}}$-module structure is induced by the $\dmx{m'}{X^{(i)}}$-module 
structure.

To statisfy the first condition, it suffices to endow $\sE_i$ with the 
$\dmx{m}{X^{(i)}}$-module structure deduced by transport via $\psi_{m,r,i}$ from the 
canonical structure of $F^{m+r\,*}\sE_{i+m+r}$: the fact that, with this definition, 
$\alpha_i$ is $\dmx{m}{X^{(i)}}$-linear follows then from Proposition \ref{GetmPDstrat} 
(iii). As to the second condition, it is a consequence of Proposition \ref{GetmPDstrat} 
(ii).\hfill$\Box$

%%%%%%%%%%%%%%%%
\begin{rmks}\label{Direct}\hspace{-5mm}\romain Assuming that $S$ is affine, the same
result holds when $X$ is the spectrum of a localization of some smooth $\Gamma(S, 
\sO_S)$-algebra.

\romain One can also construct directly the $\dix{X^{(i)}}$-module structure of $\sE_i$ by 
observing that Lemma \ref{Diag} implies that 
%%%%%%%%%%%%%%%%
\eqn{ (F^{m+r}\times F^{m+r})^* : \sO_{X^{(i+m+r)}\times 
X^{(i+m+r)}}/\sI_{i+m+r}^{p^{m+1}} \to \sO_{X^{(i)}\times 
X^{(i)}}/\sI_{i}^{p^{m+1}} }
%%%%%%%%%%%%%%%%
can be factorized through $\sO_{X^{(i+m+r)}}$ as in the construction of $\varphi_{m,r}$. 
Arguing as in the proof of Proposition \ref{GetmPDstrat}, one gets the isomorphisms 
$\varepsilon_n$ of the stratification of $\sE_i$ for $n < p^{m+1}$. Letting $m$ go 
tend to infinity, one gets the whole stratification. This allows to prove Theorem 
\ref{OeqD} without using $\dmx{m}{X}$-module structures as intermediates. However, 
this method does not provide informations on the differential structure of pull-backs 
by a finite iteration of $F$ similar to those provided by Proposition \ref{GetmPDstrat}. 
\end{rmks}
%%%%%%%%%%%%%%%%

%%%%%%%%%%%%%%%%
\subsection{}\label{Horiz}
%%%%%%%%%%%%%%%%
We end this section with a result showing how to compute explicitly the
$\dmx{m}{X}$-module structure defined by Proposition \ref{GetmPDstrat} on
$F^{m+r\,*}\sF$, for an arbitrary $\sO_{X^{(m+r)}}$-module $\sF$.

Let $\sE$ be a $\dmx{m}{X}$-module, and let $x$ be a section of $\sE$. We will say 
that $x$ is \textit{horizontal} if, for all $n \geq 0$, we have
%%%%%%%%%%%%%%%%
\eq{horiz1}{ \varepsilon_n(1 \otimes x) = x \otimes 1, }
%%%%%%%%%%%%%%%%
where $(\varepsilon_n)_{n\geq 0}$ is the $m$-PD-stratification corresponding to the 
$\dmx{m}{X}$-module structure on $\sE$. On an open subset endowed with local 
coordinates, this condition can be expressed using the basis of operators 
$\udel^{\la\uk\ra_{(m)}}$: it follows from the Taylor formula that $x$ is horizontal 
if and only if 
%%%%%%%%%%%%%%%%
\eq{horiz2}{ \forall \uk \neq \underline{0}, \quad \udel^{\la\uk\ra_{(m)}}\cdot x = 0. }
%%%%%%%%%%%%%%%%
Thanks to the decomposition formula \cite[2.2.5.1]{Be1}, this is equivalent to the 
condition
%%%%%%%%%%%%%%%%
\eq{horiz3}{ \forall i,\ \ \forall j \leq m, \quad \del_i^{\la p^j\ra_{(m)}}\cdot x = 0. }
%%%%%%%%%%%%%%%%

%%%%%%%%%%%%%%%%
\begin{prop}\label{Fhoriz}
Under the assumptions of Proposition \ref{GetmPDstrat}, let $\sF$ be an
$\sO_{X^{(m+r)}}$-module. Then the extension of scalars $\sF \to F^{m+r\,*}\sF$ maps
the sections of $\sF$ to horizontal sections of $F^{m+r\,*}\sF = \Phi_{m,r}(\sF)$.
\end{prop}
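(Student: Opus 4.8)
The plan is to work directly with the $m$-PD-stratification $(\varepsilon_n)_{n\geq 0}$ that was put on $F^{m+r\,*}\sF=\Phi_{m,r}(\sF)$ in the proof of Proposition \ref{GetmPDstrat}, and to verify that for a section $x$ of $\sF$ the associated section $s=1\otimes x$ of $F^{m+r\,*}\sF$ satisfies $\varepsilon_n(1\otimes s)=s\otimes 1$ for all $n$, which is exactly the horizontality condition \eqref{horiz1}.

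Everything hinges on one geometric identity. Write $p_0,p_1:P_{X,(m)}\to X$ for the two projections and $\phi_{m,r}:P_{X,(m)}\to X^{(m+r)}$ for the morphism attached to $\varphi_{m,r}$. I claim that $\phi_{m,r}=F^{m+r}\circ p_0=F^{m+r}\circ p_1$. Indeed, Proposition \ref{OtoD}(i) says that $\varphi_{m,r}$ composed with the augmentation $\sP_{X^{(m+r)},(m)}\twoheadrightarrow\sO_{X^{(m+r)}}$ equals $(F^{m+r}\times F^{m+r})^*$; precomposing with either of the two structure maps $\sO_{X^{(m+r)}}\to\sP_{X^{(m+r)},(m)}$, which split that augmentation, and using the functoriality expressed by the left-hand square of \eqref{factorize}, one finds that $\varphi_{m,r}$ equals the composite of $F^{m+r\,*}:\sO_{X^{(m+r)}}\to\sO_X$ with the structure map $\sO_X\to\sP_{X,(m)}$ attached to that same projection. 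Dualizing yields the claim for $i=0,1$.

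This identity trivializes the two chains of isomorphisms \eqref{canstrat} on the section $s=F^{m+r\,*}(x)$. Since $s$ is pulled back from $X^{(m+r)}$, its canonical pullback along either projection is $p_i^*(s)=(F^{m+r}\circ p_i)^*(x)=\phi_{m,r}^*(x)$; thus, under the identifications $p_i^*(F^{m+r\,*}\sF)\riso\phi_{m,r}^*\sF$ of \eqref{canstrat}, both $p_1^*(s)=1\otimes s$ and $p_0^*(s)=s\otimes 1$ correspond to the single section $\phi_{m,r}^*(x)$. As $\varepsilon_n$ was defined to be the composite $p_1^*(F^{m+r\,*}\sF)\riso\phi_{m,r}^*\sF\riso p_0^*(F^{m+r\,*}\sF)$, reduced modulo $\osI^{\{n+1\}_{(m)}}$, it therefore sends $1\otimes s$ to $s\otimes 1$ for every $n$, which is precisely \eqref{horiz1}.

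The only delicate point — and what I would regard as the main obstacle — is the bookkeeping of the various $\sO_X$- and $\sP_{X,(m)}$-module structures: one must confirm that the canonical pullback sections $p_i^*(s)$ genuinely travel to $\phi_{m,r}^*(x)$ through the successive arrows of \eqref{canstrat} (the first being functoriality of pullback, the middle ones the factorization of $F^{m+r}\times F^{m+r}$ as $\Delta\circ\phi_{m,r}$ together with $p_i\circ\Delta=\id$), and that this is compatible with the reductions modulo $\osI^{\{n+1\}_{(m)}}$. Both are immediate from the construction, so I expect the verification to be short. Alternatively, on a chart with local coordinates one could check \eqref{horiz3} directly, namely that $\del_i^{\la p^j\ra_{(m)}}\cdot s=0$ for all $i$ and all $j\leq m$; but the geometric argument above avoids any computation.
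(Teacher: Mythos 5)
Your argument is correct and is essentially the paper's own proof: both trace the section $1\otimes x$ through the chain of isomorphisms \eqref{canstrat} (and $x\otimes 1$ through its $p_0$-analogue) to the common section $\phi_{m,r}^*(x')$, whence $\varepsilon_n(1\otimes x)=x\otimes 1$. Your explicit identity $\phi_{m,r}=F^{m+r}\circ p_0=F^{m+r}\circ p_1$, correctly extracted from diagram \eqref{factorize}, is exactly what \eqref{canstrat} encodes, so this is a slightly more spelled-out version of the same argument rather than a different route.
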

%%%%%%%%%%%%%%%%

\begin{proof}
Let $x'$ be a section of $\sF$, and $x = F^{m+r\,*}(x') \in F^{m+r\,*}\sF$. Then, if
$(\varepsilon_n)$ is the $m$-PD-stratification of $F^{m+r\,*}\sF$, $\varepsilon_n$ is
the reduction mod $\osI^{\{n+1\}}$ of the composition of \eqref{canstrat} with the
inverse of the analog of \eqref{canstrat} starting from $p_0^*(F^{m+r\,*}\sF)$. Via 
\eqref{canstrat}, $1\otimes x$ maps to $\phi_{m,r}^*(x')$, and the same holds for the 
image of $x\otimes 1$ via the inverse of the analog of \eqref{canstrat} for 
$p_0^*(F^{m+r\,*}\sF)$. Therefore, $\varepsilon_n(1\otimes x) = x\otimes 1$, which 
proves the proposition.
\end{proof}

%%%%%%%%%%%%%%%%
\begin{rmks}\label{Explicit}\hspace{-5mm}\romain Locally, a section of
$F^{m+r\,*}\sF$ can be written as $x = \sum_i a_i\otimes x'_i$, where $a_i \in \sO_X$
and $x'_i \in \sF$. Together with the Leibniz formula \cite[Prop.~2.2.4, (iv)]{Be1},
the previous proposition implies that, for any operator $P \in \dmx{m}{X}$, the action
of $P$ on $x$ is given by
%%%%%%%%%%%%%%%%
\eq{explicit}{ P\cdot x = \sum_i P(a_i)x_i, } 
%%%%%%%%%%%%%%%%
with $x_i = F^{m+r\,*}(x'_i)$. 

\romain We can apply the previous proposition to $\sF = \sO_{X^{(m+r)}}$, and this shows 
that the homomorphism $\sO_{X^{(m+r)}} \to \sO_X \to \dmx{m}{X}$ maps 
$\sO_{X^{(m+r)}}$ to the center of $\dmx{m}{X}$. So, for any operator $P \in 
\dmx{m}{X}$, we can let $P$ operate on $F^{m+r\,*}\sF$ by $P \otimes \Id_{\sF}$. 
Formula \eqref{explicit} shows that the $\dmx{m}{X}$-module structure obtained in this 
way on $F^{m+r\,*}\sF$ is the one defined by Proposition \ref{GetmPDstrat}.

\item When $\fa = 0$ and $m=0$, the homomorphism $\sF \to \sE := F^*\sF$ identifies
$\sF$ with the subsheaf $\sE^{\nabla}$ of horizontal sections of $\sE$
\cite[Th.~5.1]{Ka}. This is no longer true in general. For example, let $k$ be a 
perfect field of characteristic $p$, $S = \Spec W_r(k)$, endowed with its natural
Frobenius action, and $\fa = p\sO_S$. If $X$ is a smooth $S$-scheme, and $m=0$,
$\sO_X^{\nabla} = \Ker(d : \sO_X \to \Omega^1_X)$ can be identified with the sheaf 
$W_r\sO_{X_0}$ of Witt vectors of length $r$ on $X_0$ \cite[III, 1.5]{IR}, thanks to 
the map
%%%%%%%%%%%%%%%%
\eqn{(x_0,\ldots, x_{r-1}) \mapsto \tx_0^{p ^r} + \cdots + p^{r-1}\tx_{r-1}^p, }
%%%%%%%%%%%%%%%%
where $\tx_i$ is any section $\sO_X$ lifting $x_i$. In particular, if $\dim(X/S) \geq
1$ and $r \geq 2$, $\sO_X^{\nabla}$ is not flat over $S$, hence cannot be identified
with $\sO_{X^{(r)}}$.
\end{rmks}
%%%%%%%%%%%%%%%%

%%%%%%%%%%%%%%%%%%%%%%%%%%%%%%%%
\section{Frobenius descent}
%%%%%%%%%%%%%%%%%%%%%%%%%%%%%%%%

We now apply the theory of Frobenius descent developed in \cite{Be2} to the case of 
$\dix{X}$-modules.

%%%%%%%%%%%%%%%%
\begin{thm}\label{Descent}
Under the hypothese of \ref{Setup}, the functor $F^*$ defines an equivalence of 
categories between the category of $\dix{X^{(1)}}$-modules and the category of 
$\dix{X}$-modules.
\end{thm}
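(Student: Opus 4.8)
The plan is to deduce Theorem~\ref{Descent} from the general Frobenius descent theorem of \cite[2.3.6]{Be2}, together with the level-$m$ compatibility statements already at hand. The overall strategy is to reduce the assertion about $\dix{X}$-modules to a statement about $\dmx{m}{X}$-modules at each finite level $m$, where Frobenius descent is available, and then pass to the limit using the isomorphism \eqref{limit}.

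First I would recall the level-$m$ version: by \cite[Prop.~2.2.3]{Be2} (or its infinitesimally lifted analogue, which is the content we have been developing), for each $m \geq 0$ the inverse image functor $F^*$ induces an equivalence between the category of $\dmx{m}{X^{(1)}}$-modules and the category of $\dmx{m+1}{X}$-modules. This is the heart of Frobenius descent at finite level: pulling back along one lift of Frobenius raises the level by one while remaining an equivalence. The key compatibility, which lets these equivalences fit together, is that scalar restriction from level $m+1$ to level $m$ intertwines these functors appropriately, so that the $F^*$-equivalences are compatible with the transition maps in the direct system $\{\dmx{m}{X}\}_m$.

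Next I would assemble these level-wise equivalences into the statement for $\dix{X}$. Since $\dix{X} = \varinjlim_m \dmx{m}{X}$ by \eqref{limit}, a $\dix{X}$-module is the same as a compatible system of $\dmx{m}{X}$-module structures, each induced from the next by scalar restriction; and likewise for $X^{(1)}$. Under $F^*$, a $\dix{X^{(1)}}$-module yields at each level $m$ a $\dmx{m+1}{X}$-module, and by the compatibility of the previous paragraph these glue to a genuine $\dix{X}$-module structure on $F^*\sE$. Conversely, the quasi-inverses at each finite level, being compatible with restriction, assemble into a quasi-inverse to $F^*$ on $\dix{X}$-modules. This gives full faithfulness and essential surjectivity simultaneously.

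The main obstacle I expect is verifying the level-shift compatibility precisely, namely that the finite-level descent equivalences commute with the transition homomorphisms $\dmx{m}{} \to \dmx{m+1}{}$ in a way compatible with the level raising $m \mapsto m+1$ induced by $F^*$. This is exactly the kind of bookkeeping encoded in Proposition~\ref{GetmPDstrat}~(ii) and (iii) and in the $m$-PD-stratification description of inverse images \cite[2.1.1]{Be2}; the content is not deep but must be stated carefully so that the colimit over $m$ is legitimate. Once that compatibility is in place, the passage to the limit is formal, and the theorem follows directly from \cite[2.3.6]{Be2} applied level by level.
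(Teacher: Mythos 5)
Your overall strategy coincides with the paper's: reduce via \eqref{limit} to a statement at each finite level $m$, apply the Frobenius descent theorem \cite[Th.~2.3.6]{Be2} to obtain an equivalence between $\dmx{m}{X^{(1)}}$-modules and $\dmx{m+1}{X}$-modules, check that these equivalences commute with restriction of scalars (the remark of \cite[Prop.~2.2.3]{Be2}), and conclude formally. Note, however, that the level-$m$ equivalence itself is the content of \cite[Th.~2.3.6]{Be2}, not of \cite[Prop.~2.2.3]{Be2}, which only supplies the compatibility with restriction; your first paragraph misattributes the main input, although you do invoke 2.3.6 at the end.

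The genuine gap is that you never verify that the hypotheses of \cite[Th.~2.3.6]{Be2} are satisfied in the present lifted situation, and your assertion that the level-$m$ descent equivalence holds ``for each $m \geq 0$'' is unjustified. The Frobenius descent theorem of \cite{Be2} is stated for a base whose ideal carries an $m$-PD-structure, whereas in the setup of \ref{Setup} the ideal $\fa$ is merely a quasi-coherent nilpotent ideal containing $p$, with no divided power structure given. The one non-formal step of the paper's proof supplies exactly this: choosing $r$ with $\fa^r = 0$ and $m_0$ with $p^{m_0} \geq r$, every section $a \in \fa$ satisfies $a^{p^{m_0}} = 0$ and $p\fa \subset p\sO_X$, so the PD-ideal $p\sO_S$ equips $\fa$ with the trivial $m$-PD-structure for all $m \geq m_0$ --- and only for such $m$. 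Consequently one obtains the equivalence between $\dmx{m}{X^{(1)}}$-modules and $\dmx{m+1}{X}$-modules only for $m \geq m_0$; this suffices because the levels $m \geq m_0$ are cofinal in the colimit \eqref{limit}, but the restriction cannot be dropped. Without this verification the appeal to \cite[Th.~2.3.6]{Be2} is not legitimate: the entire point of the generality of \ref{Setup} is that one is no longer in characteristic $p$, where the hypotheses of the descent theorem would hold automatically.
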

%%%%%%%%%%%%%%%%

\begin{proof}Using \eqref{limit}, we can again view a $\dix{X}$-module as an
$\sO_X$-module endowed for all $m \in \NN$ with a structure of $\dmx{m}{X}$-module
extending its $\sO_X$-module structure, in a compatible way when $m$ varies.

Let $r$ be such that $\fa^r = 0$, and let $m_0$ be an integer such that $p^{m_0}\geq
r$. We endow the ideal $p\sO_S \subset \fa$ with its canonical PD-structure. Because
any section $a \in \fa$ is such that $a^{p^{m_0}} = 0$, and $p\fa \subset p\sO_X$, the
PD-ideal $p\sO_X$ defines for all $m \geq m_0$ an $m$-PD-structure on $\fa$ (called the
\textit{trivial} $m$-PD-structure, see \cite[1.3.1, Ex.~(ii)]{Be1}). So we can apply
the Frobenius descent theorem \cite[Th.~2.3.6]{Be2}, and we obtain that, for each $m
\geq m_0$, the functor $F^*$ defines an equivalence of categories between the category
of $\dmx{m}{X^{(1)}}$-modules and the category of $\dmx{m+1}{X}$-modules. Moreover, for
$m' \geq m$, the remark of \cite[Prop.~2.2.3]{Be2} implies that these equivalences
commute with restrictions of scalars from $\dmx{m'}{X^{(1)}}$ to $\dmx{m}{X^{(1)}}$, 
and from $\dmx{m'+1}{X}$ to $\dmx{m+1}{X}$. The theorem follows formally. 
\end{proof}

%%%%%%%%%%%%%%%%
\begin{rmk}
Instead of deducing Theorem \ref{Descent} from the Frobenius descent theorem for
$\dmx{m}{X}$-modules, one could also prove it directly by repeating for stratifications
the arguments developed in the proof of \cite[Th.~2.3.6]{Be2} for
$m$-PD-stratifications. Let us recall that the proof builds upon the fact that $F$ is a
finite locally free morphism, as are the morphisms induced between appropriate
infinitesimal neighbourhoods of $X$ and $X^{(1)}$ in their products over $S$. Then
one shows that the stratification induces a descent datum from $X$ to $X^{(1)}$, which
allows to descend the $\sO_X$-module underlying a $\dix{X}$-module as an
$\sO_{X^{(1)}}$-module. The same type of argument allows to descend the isomorphisms
$\varepsilon_n$ defining the stratification corresponding to the $\dix{X}$-module
structure. Finally, using fppf descent again, one shows that the cocycle condition for
the $\varepsilon_n$'s implies the cocycle condition for the descended isomorphisms.
\end{rmk}
%%%%%%%%%%%%%%%%

%%%%%%%%%%%%%%%%
\begin{cor}\label{Divide}
Under the hypotheses of $\ref{Setup}$, the functor 
%%%%%%%%%%%%%%%%
\eq{divide}{ \{ F\textnormal{-divided }\dix{X}\textnormal{-modules} \} \lra 
\{ \dix{X}\textnormal{-modules} \}, \quad\quad (\sE_i,\alpha_i) \mapsto \sE_0, }
%%%%%%%%%%%%%%%%
is an equivalence of categories.
\end{cor}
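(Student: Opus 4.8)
The plan is to deduce this from the single-step Frobenius descent of Theorem \ref{Descent}, applied after base change by the powers of $\sigma$ and iterated indefinitely. The essential observation is that the whole datum of \ref{Setup} is stable under base change by $\sigma$: replacing $X$ by $X^{(i)}$, whose associated twist is $(X^{(i)})^{(1)} = X^{(i+1)}$ and whose Frobenius lifting is the induced $F : X^{(i)} \to X^{(i+1)}$, Theorem \ref{Descent} applies verbatim and shows that, for every $i \geq 0$, the inverse image functor
\[
F^* : \{\dix{X^{(i+1)}}\textnormal{-modules}\} \lra \{\dix{X^{(i)}}\textnormal{-modules}\}
\]
is an equivalence of categories. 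The category of $F$-divided $\dix{X}$-modules is then precisely the category of compatible families $(\sE_i,\alpha_i)$ along this tower of equivalences, and the functor \eqref{divide} is the projection onto the zeroth term. So the statement reduces to a purely formal fact about towers of equivalences.

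To obtain a quasi-inverse and in particular essential surjectivity, I would choose for each $i$ a quasi-inverse $G_i$ to $F^*$, together with its adjunction isomorphism $F^*G_i \riso \Id$. Given a $\dix{X}$-module $\sE_0$, I set $\sE_{i+1} := G_i(\sE_i)$ by induction, and let $\alpha_i : F^*\sE_{i+1} = F^*G_i(\sE_i) \riso \sE_i$ be the corresponding isomorphism. This produces an $F$-divided $\dix{X}$-module $(\sE_i,\alpha_i)$ whose image under \eqref{divide} is $\sE_0$, and the assignment $\sE_0 \mapsto (\sE_i,\alpha_i)$ is visibly functorial, so it gives a candidate quasi-inverse.

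For full faithfulness I would show that $\sE_0$ together with the $\alpha_i$ determines the entire morphism. Given two $F$-divided $\dix{X}$-modules $(\sE_i,\alpha_i)$ and $(\sE'_i,\alpha'_i)$ and a $\dix{X}$-linear map $u_0 : \sE_0 \to \sE'_0$, the compatibility with the $\alpha$'s forces, inductively,
\[
F^* u_{i+1} = (\alpha'_i)^{-1} \circ u_i \circ \alpha_i,
\]
and since $F^*$ is fully faithful, each $F^* u_{i+1}$ descends to a unique $\dix{X^{(i+1)}}$-linear map $u_{i+1}$. Hence $u_0$ extends to a unique morphism of $F$-divided $\dix{X}$-modules, which is exactly the bijectivity of $\Hom((\sE_i,\alpha_i),(\sE'_i,\alpha'_i)) \to \Hom(\sE_0,\sE'_0)$.

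I expect no genuine obstacle here: the argument is essentially formal once Theorem \ref{Descent} is available. The only point that deserves a moment's care is the verification that the data of \ref{Setup} is indeed preserved under base change by $\sigma^i$, so that Theorem \ref{Descent} may legitimately be applied with $X^{(i)}$ in place of $X$ for every $i$; granting this, the infinite iteration and the bookkeeping of the isomorphisms $\alpha_i$ present no difficulty.
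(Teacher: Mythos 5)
Your proposal is correct and follows exactly the paper's route: the paper's entire proof of Corollary \ref{Divide} is ``It suffices to apply repeatedly Theorem \ref{Descent},'' and your argument is simply a careful unpacking of that iteration, including the (valid) observation that the hypotheses of \ref{Setup} are stable under base change by $\sigma^i$ so that Theorem \ref{Descent} applies to each $F : X^{(i)} \to X^{(i+1)}$. The formal tower-of-equivalences bookkeeping you supply (choice of quasi-inverses $G_i$ for essential surjectivity, full faithfulness of each $F^*$ for full faithfulness of the projection) is exactly what the paper leaves implicit.
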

%%%%%%%%%%%%%%%%

\begin{proof}
It suffices to apply repeatedly Theorem \ref{Descent}.
\end{proof}

%%%%%%%%%%%%%%%%
\begin{thm}\label{Main}
Under the hypotheses of \ref{Setup}, there exists an equivalence of categories 
%%%%%%%%%%%%%%%%
\eq{delta}{ \Delta_S : \{ F\textnormal{-divided }\sO_X\textnormal{-modules} \}
\xra{\ \approx\ } \{ \dix{X}\textnormal{-modules} \}   
 }
%%%%%%%%%%%%%%%%
which satisfies the following properties:

\romain If $(\sE_i,\alpha_i)$ is an $F$-divided $\sO_X$-module, the $\sO_X$-module 
underlying $\Delta_S(\sE_i,\alpha_i)$ is $\sE_0$. 

\romain $\Delta_S$ is exact, $\sO_S$-linear, and commutes with tensor products in both 
categories.

\romain If $(S',\fa',\sigma')$ statisfies the conditions of \ref{Setup}, if $u :
(S',\fa',\sigma')\to (S,\fa,\sigma)$ is a morphism commuting with $\sigma$ and 
$\sigma'$, and such that $u^{-1}(\fa) \to \fa'$, then $\Delta_S$ and $\Delta_{S'}$ 
commute with base change by $u$. 

\romain If $S = \Spec k$, where $k$ is a perfect field of characteristic $p$, $\Delta_S$ 
is the equivalence defined by Gieseker in \cite[Th.~1.3]{Gi}.

\end{thm}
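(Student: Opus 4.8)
The plan is to define $\Delta_S$ as the composite of the quasi-inverse of $\Omega_S$ furnished by Theorem \ref{OeqD} with the equivalence of Corollary \ref{Divide}. Explicitly, given an $F$-divided $\sO_X$-module $(\sE_i,\alpha_i)$, Theorem \ref{OeqD} endows each $\sE_i$ with a canonical $\dix{X^{(i)}}$-module structure making the $\alpha_i$ into $\dix{X^{(i)}}$-linear isomorphisms, without altering the underlying $\sO_{X^{(i)}}$-modules; Corollary \ref{Divide} then sends this $F$-divided $\dix{X}$-module to $\sE_0$. Since both functors are equivalences, so is $\Delta_S$, and property (i) is immediate from this description.

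For (ii), exactness and $\sO_S$-linearity reduce, via (i), to the corresponding properties of the functor $(\sE_i,\alpha_i) \mapsto \sE_0$ on underlying $\sO$-modules: a sequence of $\dix{X}$-modules is exact if and only if the underlying sequence of $\sO_X$-modules is, exactness of $F$-divided modules is tested termwise, and all the functors involved respect the $\Gamma(S,\sO_S)$-module structure on Hom-groups. The compatibility with tensor products is the one point requiring care: one equips the category of $F$-divided $\sO_X$-modules with the termwise tensor product $(\sE_i,\alpha_i)\otimes(\sE'_i,\alpha'_i) = (\sE_i\otimes_{\sO_{X^{(i)}}}\sE'_i, \alpha_i\otimes\alpha'_i)$, and the category of $\dix{X}$-modules with the usual tensor product over $\sO_X$. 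Because $\varphi_{m,r}$ is a ring homomorphism, the isomorphisms \eqref{canstrat} used in Proposition \ref{GetmPDstrat} to build the $m$-PD-stratification are compatible with tensor products; hence the $\dmx{m}{X^{(i)}}$-module structures constructed in the proof of Theorem \ref{OeqD} are too, and passing to the limit over $m$ gives that $\Delta_S$ is monoidal.

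Property (iii) follows from the functoriality of the entire construction with respect to the base. The hypothesis on $u$ guarantees $X'_0 = X_0\times_{S_0}S'_0$ and that the Frobenius liftings match, so that forming $X^{(i)}$, the $m$-PD-envelopes $\sP_{X,(m)}$, and hence the principal-parts sheaves all commute with base change by $u$ \cite{Be1}. The homomorphism $\varphi_{m,r}$ of Proposition \ref{OtoD} is characterized by the commutativity of \eqref{factorize}, a property preserved by base change; consequently the functors $\Phi_{m,r}$ and the resulting stratifications commute with $u^*$, and the same holds in the limit for $\Delta_S$.

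For (iv), take $\fa = 0$ and $r = 1$, so that $F$ is the relative Frobenius $F_{X/S}$ and, $k$ being perfect, $\sigma$ is an isomorphism identifying $F_{X/S}$ with the absolute Frobenius up to the base change by $\sigma^i$. With $m = 0$ the canonical structure produced by Proposition \ref{GetmPDstrat} is, by Corollary \ref{Getconn} and the Example following it, exactly the Cartier--Katz connexion of \cite[Th.~5.1]{Ka}. Since Gieseker's equivalence \cite[Th.~1.3]{Gi} is itself built by infinite Frobenius descent from this same Cartier descent, it suffices to check that the $\dix{X}$-module structure assembled by our iterated descent (Corollary \ref{Divide}) coincides stepwise with Gieseker's; this is a direct comparison of the two descent procedures, both governed by \cite[Th.~5.1]{Ka}. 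I expect the monoidality in (ii) and this final comparison with Gieseker's construction to be the only steps demanding genuine verification, the remaining assertions being formal consequences of Theorems \ref{OeqD} and \ref{Descent}.
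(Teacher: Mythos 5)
Your proposal is correct and follows essentially the same route as the paper: $\Delta_S$ is defined as the composite of the quasi-inverse of $\Omega_S$ (Theorem \ref{OeqD}) with the equivalence of Corollary \ref{Divide}, properties (i)--(ii) are formal, monoidality comes from the compatibility of the $m$-PD-stratification construction with tensor products, and (iii) from the base-change compatibility of $\varphi_{m,r}$ from Proposition \ref{OtoD}. The only divergence is in (iv), where the paper verifies agreement with Gieseker's equivalence directly via the explicit formula \eqref{explicit} for the action of operators of order $< p^{m+1}$, rather than by your stepwise comparison of the two Frobenius-descent procedures; both arguments rest on the same underlying facts and your version is sound.
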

%%%%%%%%%%%%%%%%

\begin{proof}
We define the equivalence $\Delta_S$ by composing the quasi-inverse to the forgetful
functor $\Omega_S$ defined in \ref{OeqD} with the equivalence \eqref{divide}. Each of
these is clearly exact and $\sO_S$-linear. As for the compatibility with tensor
products, it follows from the fact that the $\sD$-module structure on the tensor
product over $\sO_X$ of two $\dmx{m}{X}$-modules is defined by the tensor product of
the corresponding $m$-PD-stratifications.

To check the commutation with base change, one uses again the description of the
$\dmx{m}{X}$-module structures in terms of $m$-PD-stratifications. Then this property
follows from the fact that the construction of the homomorphism $\varphi_{m,r}$ defined in
Proposition \ref{OtoD} commutes with base change.

The last assertion follows from \eqref{explicit} and from the definition given in 
\cite{Gi}, p.~4, l.~-12, thanks to the fact that the homomorphisms $\dmx{m}{X} \to 
\dix{X}$ induce isomorphisms between the submodules of operators of order $< p^{m+1}$.
\end{proof}

%%%%%%%%%%%%%%%%
\subsection{}\label{Formal}
%%%%%%%%%%%%%%%%
Theorem \ref{Main} implies a similar result on adic formal schemes. We now consider the
following setup. Let $\sS$ be a locally noetherian adic formal scheme, and let $\fa
\subset \sO_{\sS}$ be an ideal of definition of $\sS$. For each $r \geq 0$, we denote by
$S_r$ the closed subscheme of $\sS$ defined by $\fa^{r+1}$. We assume that $p \in \fa$,
and that we are given an endomorphism $\sigma : \sS \to \sS$ lifting the absolute
Frobenius endomorphism of $S_0$.

Let $\sX \to \sS$ be an $\fa$-adic formal scheme over $\sS$, with reduction $X_r \to
S_r$ mod $\fa^{r+1}$. We assume that $\sX$ is smooth over $\sS$, i.e., that $X_r$ is
smooth over $S_r$ for all $r$. For all $i \geq 0$, we denote again by $\sX^{(i)}$ the
pull-back of $\sX$ by $\sigma^i$ in the category of $\fa$-adic formal schemes, and we
assume that we are given a morphism $F : \sX \to \sX^{(1)}$ lifting the relative
Frobenius morphism $F_{X_0/S_0}$. We extend the notation as in \ref{Setup} to define $F
: \sX^{(i)} \to \sX^{(i+1)}$ for all $i$. Note that, in a neighbourhood of some point
$x \in \sX$, the relative dimension $d$ of $X_r$ over $S_r$ does not depend on $r$.
Then each $X_r$ is finite locally free of rank $p^d$ over $X_r^{(1)}$ in a
neighbourhood of $x$. By taking inverse limits, we get that $\sO_{\sX}$ is a finite
locally free over $\sO_{\sX^{(1)}}$, and similarly for each $\sO_{\sX^{(i)}}$ over
$\sO_{\sX^{(i+1)}}$.

We will say that an $\sO_{\sX}$-module $\sE$ is \textit{separated and complete} if $\sE
\riso \varprojlim_r \sE/\fa^r\sE$. Note that this is a local condition which depends
only on the underlying $\sO_{\sS}$-module, and that a finite direct sum of
$\sO_{\sX}$-modules is separated and complete if and only if each factor is separated
and complete. It follows that an $\sO_{\sX^{(i)}}$-module $\sF$ is separated and
complete if and only if $F^*\sF$ is separated and complete. Coherent
$\sO_{\sX}$-modules are separated and complete.

As in \ref{Setup}, we define $F$-divided $\sO_{\sX}$-modules as being families $(\sE_i,
\alpha_i)_{i \geq 0}$ of $\sO_{\sX^{(i)}}$-modules endowed with
$\sO_{\sX^{(i)}}$-linear isomorphisms $\alpha_i : F^*\sE_{i+1} \riso \sE_i$. We will
say that an $F$-divided module is separated and complete if each $\sE_i$ is separated
and complete.

Let $\dix{\sX} = \cup_n (\varprojlim_r\dix{X_r,n})$, where $\dix{X_r,n}$ is the
subsheaf of differential operators of order $\leq n$ on $X_r$. We will say that a
$\dix{\sX}$-module is separated and complete if the underlying $\sO_{\sX}$-module is
separated and complete.

%%%%%%%%%%%%%%%%
\begin{thm}\label{Mainform}
Under the hypotheses of \ref{Formal}, there exists an equivalence of categories 
%%%%%%%%%%%%%%%%
\ml{deltaform}{ \Delta_{\sS} : \{ \textnormal{Separated and complete\
}F\textnormal{-divided\ }\sO_{\sX}\textnormal{-modules} \} \\
 \xra{\ \approx\ } 
 \{ \textnormal{Separated and complete\ }\dix{\sX}\textnormal{-modules} \} }
%%%%%%%%%%%%%%%%
which satisfies the following properties:

\romain If $(\sE_i,\alpha_i)$ is a separated and complete $F$-divided $\sO_{\sX}$-module,
the $\sO_{\sX}$-module underlying $\Delta_{\sS}(\sE_i,\alpha_i)$ is $\sE_0$.

\romain $\Delta_{\sS}$ is $\sO_{\sS}$-linear, preserves exactness for exact sequences of
separated and complete modules, and is compatible with completed tensor products.

\romain For each $r$, $\Delta_{\sS}$ induces the equivalence $\Delta_{S_r}$ between the 
subcategories of objects annihilited by $\fa^r$.

\romain If $\sS = \Spf\sV$, where $\sV$ is a complete discrete valuation ring of mixed
characteristics, $\fa$ is the maximal ideal of $\sV$, and $\sX$ is affine over $\sS$
with local coordinates, then $\Delta_{\sS}$ is the equivalence described in
\cite[3.2.2]{DS} (introduced by Matzat in the context of local differential modules
over $1$-dimensional local differential rings \cite{Ma}).

\end{thm}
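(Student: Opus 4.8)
The plan is to deduce Theorem \ref{Mainform} from Theorem \ref{Main} by working at each finite level $S_r$ and then passing to the inverse limit over $r$. First I would check that for every $r \geq 0$ the triple $(S_r, \fa\sO_{S_r}, \sigma)$ satisfies the hypotheses of \ref{Setup}: the ideal $\fa\sO_{S_r} = \fa/\fa^{r+1}$ is nilpotent since its $(r+1)$-st power vanishes, it contains the image of $p$, and $\sigma$ descends to $S_r$ because $\sigma^\#(f) \equiv f^p \pmod{\fa}$ forces $\sigma^\#(\fa) \subseteq \fa$, hence $\sigma^\#(\fa^{r+1}) \subseteq \fa^{r+1}$. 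Together with the smooth morphism $X_r \to S_r$ and the reduction of $F$, this places us exactly in the situation of Theorem \ref{Main}, which yields for each $r$ an equivalence $\Delta_{S_r}$ between $F$-divided $\sO_{X_r}$-modules and $\dix{X_r}$-modules.

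Next I would set up the dictionary between separated and complete objects over $\sX$ and compatible systems of objects over the $X_r$. On the $\sO$-module side, a separated and complete $F$-divided $\sO_{\sX}$-module $(\sE_i,\alpha_i)$ reduces modulo $\fa^{r+1}$ to an $F$-divided $\sO_{X_r}$-module; here one uses that $F$ is finite locally free, so that $F^*$ commutes with reduction modulo $\fa^{r+1}$ and with $\varprojlim_r$, the latter ensuring conversely that a compatible system of $F$-divided $\sO_{X_r}$-modules reassembles, via $\sE_i = \varprojlim_r \sE_i/\fa^{r+1}\sE_i$, into a separated and complete $F$-divided $\sO_{\sX}$-module. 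On the $\sD$-module side, the definition $\dix{\sX} = \cup_n \varprojlim_r \dix{X_r,n}$ shows that a $\dix{\sX}$-action on a separated and complete $\sO_{\sX}$-module is the same datum as a compatible family of $\dix{X_r}$-actions on its reductions. Thus both categories in \eqref{deltaform} are identified with categories of compatible systems indexed by $r$, i.e.\ with the inverse limits (in the $2$-categorical sense) of the finite-level categories.

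With these identifications in hand, the equivalences $\Delta_{S_r}$ assemble into $\Delta_{\sS}$ provided they are compatible with the transition maps, and this is where property (iii) of Theorem \ref{Main} enters: the closed immersion $S_{r-1} \inj S_r$ commutes with $\sigma$ and carries $\fa\sO_{S_r}$ onto $\fa\sO_{S_{r-1}}$, so it is a morphism of the type considered there, and therefore $\Delta_{S_{r-1}}$ and $\Delta_{S_r}$ commute with the corresponding base change. Hence $\Delta_{\sS} := \varprojlim_r \Delta_{S_r}$ is a well-defined functor on compatible systems, and it is an equivalence because each $\Delta_{S_r}$ is one and an inverse limit of levelwise equivalences compatible with the transition functors is again an equivalence.

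Finally I would verify the four listed properties. Properties (i) and (ii) hold levelwise for each $\Delta_{S_r}$ by the corresponding assertions of Theorem \ref{Main} and pass to the limit, the completed tensor product being the inverse limit of the tensor products of the reductions; property (iii) is built into the construction, since an object annihilated by $\fa^r$ is determined by its reduction at level $r-1$. For property (iv), when $\sS = \Spf\sV$ with $\sV$ a complete discrete valuation ring of mixed characteristic, one matches the present construction with that of \cite[3.2.2]{DS} by comparing, at each finite level and then in the limit, the explicit action computed in \eqref{explicit}. I expect the main obstacle to be the middle part: establishing cleanly that $F^*$ and the $\dix{}$-module structures commute with the inverse limit over $r$, so that the two categories of \eqref{deltaform} genuinely are the limits of the finite-level categories and the levelwise equivalences glue. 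Once this is secured, everything else is either formal or reduces to the already-proved finite-level statements.
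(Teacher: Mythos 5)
Your proposal is correct and follows essentially the same route as the paper: reduce modulo $\fa^{r+1}$ to invoke Theorem \ref{Main} at each finite level, use the base-change compatibility and the fact that $F$ is finite locally free (so $F^*$ commutes with reductions and inverse limits) to glue the levelwise equivalences into $\Delta_{\sS}$, and verify (iv) via the explicit formula \eqref{explicit} passed to the limit. The only difference is one of detail: the paper makes the limit comparison for (iv) explicit by combining \eqref{explicit} with Proposition \ref{Fhoriz} and the Leibniz rule to obtain the convergent expression \eqref{adiclimit}, whereas you leave that step as a sketch.
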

%%%%%%%%%%%%%%%%

\begin{proof}
Let $F_r : X_r \to X_r^{(1)}$ be the reduction of $F$ modulo $\fa^{r+1}$. Using the
$F_r$'s, Theorem \ref{Main} provides for each $r$ an equivalence between the category
of $F$-divisible $\sO_{X_r}$- modules and the category of $\dix{X_r}$-modules.
Moreover, these equivalences are compatible when $r$ varies, thanks to \ref{Main} (ii).
So, starting from a separated and complete $F$-divided $\sO_{\sX}$-module
$(\sE_i,\alpha_i)$, we obtain an inverse systeme of $F$-divided $\sO_{X_r}$-modules
$(\sE_i/\fa^{r+1}\sE_i,\alpha_i \mathrm{\ mod\ } \fa^{r+1})$, from which we deduce
compatible structures of $\dix{X_r}$-modules on the quotients $\sE_0/\fa^{r+1}\sE_0$.
Viewing these as comptatible structures of $\dix{\sX}$-modules, they define a structure
of $\dix{\sX}$-module on $\sE_0 \riso \varprojlim_r \sE_0/\fa^{r+1}\sE_0$. This defines
the functor $\Delta_{\sS}$, and property (i) is satisfied.

Conversely, given a separated and complete $\dix{\sX}$-module $\sE$, the inverse system
$(\sE/\fa^{r+1}\sE)$ defines an inverse system of $F$-divided $\sO_{X_r}$-modules
$(\sE_{i,r},\alpha_{i,r})$ such that $\sE_{0,r}=\sE/\fa^{r+1}\sE$. Since a
$\dix{\sX^{(i)}}$-module $\sF$ is separated and complete if and only if $F^*\sF$ is
separated and complete, and $F^* \varprojlim_r\sF_r \riso \varprojlim_r F_r^*\sF_r $
for any inverse system of $\sO_{X^{(i)}_r}$-modules $(\sF_r)$, we obtain by taking
inverse limits a separated and complete $F$-divided $\sO_{\sX}$-module
$(\sE_i,\alpha_i)$ such that $\sE_0 = \sE$. .

The $\sO_{\sS}$-linearity and the compatibility with completed tensor products are also
clear from the definition of $\Delta_{\sS}$, as is assertion (iii). Compatibility with
exact sequences follows from (i), since, on the one hand a sequence of
$\dix{\sX}$-modules is exact if and only if it is exact as a sequence of
$\sO_{\sX}$-modules, on the other hand a sequence of $F$-divided modules is exact if
and only if the sequence of terms of index $0$ is exact (as $F$ is finite locally
free).

To prove the last assertion, we apply the description given in \eqref{explicit} for the
action of differential operators on the reduction mod $\fa^{r+1}$, for all $r$. Indeed,
let us fix an integer $r$ and a multi-index $\uk$, and let $m$ be such that $|\uk| \leq
p^m$. Then the homomorphism $\dmx{m}{X_r} \to \dix{X_r}$ maps $\udel^{\la\uk\ra_{(m)}}$
to $\udel^{[\uk]}$. Let $(\sE_i,\alpha_i)$ be an $F$-divided $\sO_{\sX}$-module, and
let $\psi_{m,r+1} : F^{m+r+1\,*}\sE_{m+r+1} \riso \sE_0$ be the isomorphism defined by
the $\alpha_i$'s. If $x$ is a section of $\sE_0$, we can write locally $x$ as a finite
combination $x = \psi_{m,r+1}(\sum_j a_{r,j}\otimes x'_{r,j})$, where $a_{r,j} \in
\sO_{\sX}$ and $x'_{r,j} \in \sE_{m+r+1}$. Let $x_{r,j} =
\psi_{m,r+1}(F^{m+r+1\,*}(x'_{r,j})) \in \sE_0$. As the image of $x_{r,j}$ in
$\sE_0/\fa^{r+1}\sE_0$ is horizontal for the structure of $\dmx{m}{X_r}$-module, thanks
to Proposition \ref{Fhoriz}, we obtain that, for any $\uk' \neq \underline{0}$,
$\udel^{[\uk']}x_{r,j} \in \fa^{r+1}\sE_0$. Thus the Leibnitz formula implies that
$\udel^{[\uk]}x \equiv \sum_j \udel^{[\uk]}(a_{r,j})x_{r,j}$ mod $\fa^{r+1}\sE_0$. If
we let $r$ tend to infinity, we obtain
%%%%%%%%%%%%%%%%
\eq{adiclimit}{ \udel^{[\uk]}x = \lim_{r\to\infty} \sum_j \udel^{[\uk]}(a_{r,j})x_{r,j}. }
%%%%%%%%%%%%%%%%
This shows that the $\dix{\sX}$-module structure defined by $\Delta_{\sS}$ on $\sE_0$
coincides with the one defined in \cite[3.2.2]{DS} (see also \cite[Cor.~1.5]{Ma}).
\end{proof}


\begin{thebibliography}{99}

\bibitem{Be1}P.~Berthelot, \textit{$\mathcal{D}$-modules arithm\'etiques I.
Op\'erateurs diff\'erentiels de niveau fini}, Ann. scient. \'Ec. Norm.
Sup. {\bf 29} (1996), 185--272.

\bibitem{Be2}P.~Berthelot, \textit{$\mathcal{D}$-modules arithm\'etiques II.
Descente par Frobenius}, M\'em. Soc. Math. France {\bf 81} (2000).

\bibitem{CEB}P.~Berthelot, \textit{Introduction \`a la th\'eorie arithm\'etique des
$\mathcal{D}$-modules}, Ast\'erisque \textbf{279} (2002), 1--80.

\bibitem{BO}P.~Berthelot, A.~Ogus, \textit{Notes on crystalline cohomology}, 
Mathematical Notes \textbf{21}, Princeton Univ. Press (1978).

\bibitem{DS}J.~P.~dos~Santos, \textit{Lifting $\mathcal{D}$-modules from positive to 
zero characteristic}, preprint, to appear. 

\bibitem{Gi}D.~Gieseker, \textit{Flat vector bundles and the fundamental group in 
non-zero characteristics}, Ann. Scuola Norm. Sup. Pisa \textbf{2} (1975), 1--31.

\bibitem{EGAIV3}A.~Grothendieck, \textit{\'El\'ements de G\'eom\'etrie Alg\'ebrique} 
(r\'edig\'es avec la collaboration de J.~Dieudonn\'e) : IV. \textit{\'Etude locale des 
sch\'emas et des morphismes de sch\'emas, Troisi\`eme partie}, 
Publ. Math. I.H.\'E.S. \textbf{28} (1966), 5--255.

\bibitem{EGAIV4}A.~Grothendieck, \textit{\'El\'ements de G\'eom\'etrie Alg\'ebrique} 
(r\'edig\'es avec la collaboration de J.~Dieudonn\'e) : IV. \textit{\'Etude locale des 
sch\'emas et des morphismes de sch\'emas, Quatri\`eme partie}, 
Publ. Math. I.H.\'E.S. \textbf{32} (1967), 5--361.

\bibitem{Gr}A.~Grothendieck, \textit{Crystals and the De Rham cohomology of schemes}, 
in \textit{Dix expos\'es sur la cohomologie des sch\'emas}, North-Holland (1968), 
306--358.

\bibitem{IR}L.~Illusie, M.~Raynaud, \textit{Les suites spectrales associ\'ees au 
complexe de de Rham-Witt}, Publ. Math. I.H.\'E.S. \textbf{57} 
(1983), 73--212.

\bibitem{Ka}N.~Katz, \textit{Nilpotent connections and the monodromy 
theorem. Applications of a result of Turritin}, Publ. Math. I.H.\'E.S. \textbf{39} 
(1970), 175--232.

\bibitem{Ma}B.~H.~Matzat, \textit{Integral $p$-adic differential modules}, S\'eminaires et
Congr\`es \textbf{13} (2006), Soc. Math. France, 263--292.

\end{thebibliography}
\end{document}